\newtheorem{theorem}{Theorem}[section]
\newtheorem{proposition}[theorem]{Proposition}
\newtheorem{corollary}[theorem]{Corollary}
\newtheorem{lemma}[theorem]{Lemma}
\theoremstyle{definition}
\newtheorem{definition}[theorem]{Definition}
\newtheorem{example}[theorem]{Example}
\newtheorem{remark*}[theorem]{}
\theoremstyle{remark}
\newtheorem{remark}[theorem]{Remark}
\newcommand{\C}{\mathbb{C}}
\newcommand{\Z}{\mathbb Z}
\newcommand{\N}{\mathbb N}
\newcommand{\mM}{\mathcal{M}}
\newcommand{\mN}{\mathcal{N}}
\newcommand{\mQ}{\mathcal{Q}}
\newcommand{\mZ}{\mathcal{Z}}
\newcommand{\mU}{\mathcal{U}}
\newcommand{\mL}{\mathcal{L}}
\newcommand{\mA}{\mathcal{A}}
\newcommand{\mB}{\mathcal{B}}
\newcommand{\mC}{\mathcal{C}}
\newcommand{\mH}{\mathcal{H}}
\newcommand{\tr}{\mathrm{tr}}
\newcommand{\Ind}{\mathrm{Ind}}
\begin{document}
 \title{Regular inclusions of simple unital $C^*$-algebras}

 \author[K C
   Bakshi]{Keshab Chandra Bakshi} \address{Dept. of Mathematics, Indian
   Institute of Technology Kanpur, Kanpur, UP, INDIA}
 \email{bakshi209@gmail.com, keshab@iitk.ac.in}
 
\thanks{The first named author was supported through a DST
  INSPIRE faculty grant (reference  no. DST/INSPIRE/04/2019/002754).}

\author[V P Gupta]{Ved Prakash Gupta} \address{School of Physical
  Sciences, Jawaharlal Nehru University, New Delhi, INDIA}
\email{vedgupta@mail.jnu.ac.in, ved.math@gmail.com} \maketitle

\begin{abstract}
  We prove that an inclusion $\mB \subset \mA$ of simple unital
  $C^*$-algebras with a finite-index conditional expectation is
  regular if and only if there exists a finite group $G$ that admits a
  cocycle action $(\alpha,\sigma)$ on the intermediate
  $C^*$-subalgebra $\mC$ generated by $\mB$ and its centralizer
  $\mC_\mA(\mB)$ such that $\mB$ is outerly $\alpha$-invariant
  and $(\mB \subset \mA) \cong ( \mB \subset
  \mC\rtimes^r_{\alpha, \sigma} G)$.

Prior to this characterization, we prove the existence of
two-sided and unitary quasi-bases for the minimal conditional
expectation of any such  inclusion, and also show that such an inclusion
has  integer Watatani index  and depth at most $2$.
  \end{abstract}

\section{Introduction}

Of late, the study of inclusions of $C^*$-algebras has attracted a
good deal of attention - see, for instance, \cite{Wat, Iz, Iz2, IW,
  JOPT, Ren, R, BG2} and the references therein. Interestingly, people
have focused on different perspectives of such inclusions and the
project has thrived in the recent years. In fact, certain
fundamental structure results have been achieved during the last 25
years or so. Among those results, Kajiwara-Watatani's (Goldman type)
characterization of index-2 inclusions of $C^*$-algebras as
fixed-point inclusions via outer actions of the cyclic group $\Z_2$
(\cite[Theorem 5.13]{KW}), Renault's characterization of a Cartan pair
$\mB \subset \mA$ of $C^*$-algebras (\cite[Theorem 5.9]{Ren}) and
Izumi's characterization of depth-2 inclusions of simple
$C^*$-algebras as fixed-point inclusions via outer actions by finite
dimensional Kac algebras (\cite[Corollary 6.4]{Iz}) are noteworthy and
serve as precursors to the theme of this article.

Our approach in \cite{BG2} as well
as here is mainly motivated by the Fourier theoretic aspects and
certain structure results of the theory of subfactors, a theory which
was initiated by Vaughan Jones in the decade of 80s - see \cite{Jones,
  JS} for a quick introduction. More precisely, in \cite{BG2},
depending heavily on the fundamental work \cite{Wat} of Watatani, we
made an attempt to develop a Fourier theory on the tower of relative
commutants (which was augmented further in \cite{BGS}) and, motivated
by \cite{BDLR}, we also introduced a notion of angle between
compatible intermediate $C^*$-subalgebras of an inclusion of unital
$C^*$-algebras with a finite-index conditional expectation. In this
article, we present a structure result for {\it regular} inclusions of
simple unital $C^*$-algebras.

Recall that an inclusion $\mQ \subset \mM$ of von Neumann algebras is
said to be regular if the normalizer of $\mQ$ in $\mM$,
$\mN_\mM(\mQ):=\{u \in \mU(\mM) : u \mQ u^* = \mQ\}$, generates $\mM$
as a von Neumann-algebra.  For any irreducible regular inclusion
$N\subset M$ of factors, it is well known that its Weyl group
$G:=\frac{\mN_M(N)}{\mU(N)}$ admits an outer cocycle action $(\alpha,
\sigma)$ on $N$ and $(N \subset M) \cong (N \subset N
\rtimes_{(\alpha, \sigma)} G)$. Moreover, if $N$ and $M$ are
$II_1$-factors, then it is known, thanks to the work of Sutherland
(\cite{Sut}), that this cocycle action can be untwisted, i.e., for any
regular irreducible inclusion $N \subset M$ of factors of type $II_1$
with finite (Jones) index, its Weyl group $G$ admits an outer action
on $N$ and $(N \subset M) \cong (N \subset N \rtimes G)$ (see
\cite{Kos, PiPo2, Hong, JoPo}). \smallskip

Analogous to the notion of regularity of an inclusion of von Neumann
algebras, we say that an inclusion $\mB \subset \mA$ of unital
$C^*$-algebras is {\it regular} if the normalizer of $\mB$ in $\mA$
generates $\mA$ as a $C^*$-algebra. The crux of this article lies in
proving the following, somewhat general, characterization of regular
inclusions of simple unital $C^*$-algebras:\smallskip

    \noindent {\bf \Cref{regular-characterization}. } {\it Let $\mB \subset
  \mA$ be an inclusion of simple unital $C^*$-algebras with a
  finite-index conditional expectation from $\mA$ onto $\mB$. Then, the
  inclusion $\mB \subset \mA$ is regular if and only if there exists a
  finite group $G$ that admits a cocycle action $(\alpha,\sigma)$ on
  the intermediate $C^*$-subalgebra $\mC:=C^*(\mB \cup \mC_\mA(\mB))$ such
  that \begin{enumerate}
  \item $\mB$ is invariant under $\alpha$;
  \item for each $e \neq g \in G$, $\alpha_g$ is an outer automorphism
    of $\mB$; and,
    \item $(\mB \subset \mA) \cong
      ( \mB \subset \mC\rtimes_{\alpha, \sigma}^r G)$.
\end{enumerate}}\smallskip

As a consequence, for any irreducible regular inclusion $\mB \subset
\mA$ of simple unital $C^*$-algebras with a finite-index conditional
expectation from $\mA$ onto $\mB$, its Weyl group
$G:=\frac{\mN_\mA(\mB)}{\mU(\mB)}$ admits an outer cocycle action
$(\alpha, \sigma)$ on $\mB$ such that $(\mB \subset \mA) \cong ( \mB
\subset \mB \rtimes^r_{(\alpha, \sigma)}G)$ - see
\Cref{characterization-irreducible-regular}.

The proof of \Cref{regular-characterization} relies on first showing
that the intermediate $C^*$-subalgebra $\mC$ generated by $\mB$ and its
centralizer $\mC_\mA(\mB):=\{x \in \mA: xb = bx \text{ for all } b \in \mB\}$
is compatible in the sense of Ino-Watatani (\cite{IW}), i.e., there
exists a (finite-index) conditional expectation $F: \mA \to \mC$ such that
${(E_0)}_{\restriction_\mC} \circ F = E_0$, where $E_0: \mA \to \mB$ is the
unique minimal conditional expectation as in \Cref{minimal} (see
\Cref{IMS-F}); and then, employing the conditional expectation $F$, we
obtain a natural outer cocycle action of the generalized Weyl group
$W(\mB \subset \mA) := \frac{\mN_\mA(\mB)}{\mU(\mB)\mU(\mC_\mA(\mB))}$  on the
$C^*$-subalgebra $\mC$, which leads to the desired characterization.
  
  The skeleton of this article emerged essentially from our ongoing
  attempts at obtaining a better understanding of regular inclusions
  of type $II_1$-factors (see \cite{BG1, BG3}).  While analyzing
  regular inclusions of $II_1$-factors, it was also established in
  \cite{BG3, CKP} that every regular inclusion of type $II_1$-factors
  $N \subset M$ with finite Jones index is always of depth at most
  $2$. Drawing motivation from this, prior to the characterization
  obtained in \Cref{regular-characterization}, we also show that a
  regular inclusion $\mB \subset \mA$ of simple unital $C^*$-algebras
  with a finite-index conditional expectation from $\mA$ onto $\mB$ is
  always of depth\footnote{Analogous to the notion of depth for
  inclusions of $II_1$-factors, the notion of depth for inclusions of
  unital $C^*$-algebras was formalized in \cite{JOPT}.} at most $2$
  (\Cref{regular-depth-2}).
    
    \Cref{regular-depth-2} is achieved on the lines of the proof of
    \cite[Theorem 4.3]{BG3} (also see \cite[Corollary 3.2]{CKP}),
    whose main ingredients are the following two observations, which
    are also of independent interest and  fundamental in
    nature:\smallskip

\noindent {\it If $\mB \subset \mA$ is a regular inclusion of simple unital
  $C^*$-algebras with a finite-index conditional expectation and $E_0:
  \mA \to \mB$ is the minimal conditional expectation as in
  \Cref{minimal}, then
  \begin{enumerate}
    \item $E_0$ admits a two-sided orthonormal quasi-basis in $\mA$
      (\Cref{weyl-group}); and,
\item $E_0$ admits a unitary orthonormal quasi-basis in $\mA$
  (\Cref{E0-restriction-to-C-unitary}).
  \end{enumerate}}

The proof of \Cref{weyl-group} borrows heavily from the methodology
employed in \cite{BG1}, wherein, among other results, using the notion
of path algebras by Sunder (and, independently, by Ocneanu), we had
provided a partial answer to a question of Jones by showing that if $N
\subset M$ is a regular inclusion of type $II_1$-factors with finite
Jones index, then the (unique) trace preserving conditional $E: M \to
N$ admits a two-sided Pimsner-Popa basis in $M$.

The proof of the fact that every regular finite-index inclusion of
$II_1$-factors is of depth at most 2 (demonstrated in \cite{BG3, CKP})
depended on the existence of a unitary orthonormal basis for the
unique trace preserving conditional expectation, which was essentially
achieved by reducing the problem to a finite-dimensional setup and
then employing some nice unitary matrices from the world of Quantum
Information Theory. Here as well, a suitable adaptation of those
techniques provides a proof of \Cref{E0-restriction-to-C-unitary},
which eventually allows us to deduce that every regular inclusion $\mB
\subset \mA$ of simple unital $C^*$-algebras with a finite-index
conditional expectation from $\mA$ onto $\mB$ has depth at most $2$
(\Cref{regular-depth-2}).

 Along the way, analogous to \cite[Theorem 3.12]{BG1}, we also prove that the
Watatani index of a regular inclusion $\mB \subset \mA$ of simple unital $C^*$-algebras
is the integer   $|W(\mB \subset \mA)|\dim (\mC_\mB(\mA)) $ - see \Cref{theorem-regular}.\color{black}
\section{Preliminaries}

\subsection{Watatani index and $C^*$-basic construction}
        \subsubsection{Watatani index}
 Consider an inclusion $\mB \subseteq \mA$ of unital $C^*$-algebras
 with a conditional expectation $E:\mA \rightarrow \mB$. A finite set
 $\{\lambda_i : 1 \leq i \leq n\} \subset \mA$ is said to be a {\it
   right} (resp., {\it left}) {\it quasi-basis} for $E$ if
 \[
 x=\sum_{i=1}^n
 E(x\lambda_i)\lambda^*_i\ (\text{resp.}, x = \sum_{i=1}^n E(x
 \lambda_i^*)\lambda_i)
 \]
 for every $x\in \mA$.  A finite collection $\{\lambda_i\}\subset \mA$ is
 said to be a {\it two-sided quasi-basis} for $E$ if it is both a left and a
 right quasi-basis for $E$. For example, if a right (or a left)
 quasi-basis for $E$ is a self-adjoint set, i.e., $\{\lambda_i^* \} =
 \{\lambda_i\}$, then it is a two-sided quasi-basis for $E$. Note that a finite set $\{\lambda_i: 1 \leq i \leq n\}$ is a right
quasi-basis for $E$ if and only if $\{\lambda_i^*: 1 \leq i \leq n\}$
is a left quasi-basis for $E$.

A conditional expectation $E: \mA \to
\mB$ is said to have \textit{finite index} (in the sense of Watatani)
if it admits a right (equivalently, a left) quasi-basis $\{\lambda_i:
1 \leq i \leq n\}$ in $\mA$.  Then, the right Watatani index of $E$ is
defined as
 \[
 \mathrm{Ind}_W^r (E)= \sum_{i=1}^n \lambda_i\lambda^*_i, 
 \]
 and is independent of the right quasi-basis.  Likewise, the left
 Watatani index of $E$ is defined as \( \mathrm{Ind}_W^l (E)=
 \sum_{i=1}^n \lambda_i^*\lambda_i\), where $\{\lambda_i\}$ is a left
 quasi-basis. Clearly,
 \[
 \mathrm{Ind}_W^r (E)= \mathrm{Ind}_W^l (E)
 \]
 and this common quantity is called the Watatani index of $E$ and is
 denoted by $\mathrm{Ind}_W(E)$.

In general, $\mathrm{Ind}_W (E)$ is not a scalar but it is an
invertible positive element of $\mZ(\mA)$.

Further, a right (resp., a left) quasi-basis $\{\lambda_i: 1 \leq i
\leq n\}$ of $E$ is said to be {\it orthonormal} if $E(\lambda_i^*
\lambda_i) = \delta_{i,j}$ (resp., $E(\lambda_i \lambda_j^*) =
\delta_{i,j}$) for all $1 \leq i, j \leq n$. And, a right (or a left)
quasi-basis of $E$ is said to be {\it unitary} if it consists of
unitary elements.

{\bf Note:} Following Watatani (\cite{Wat}), throughout this article, by a
 quasi-basis we shall simply mean a right quasi-basis, and not a
 two-sided quasi-basis!

\begin{remark}\label{fi-ce-facts}
                \begin{enumerate}
\item   A finite-index conditional expectation $E: \mA \to \mB$ is faithful
  and $1_\mB=E(1_\mA) = 1_\mA$.
                \item  If $\mB \subset \mC
                  \subset \mA$ is an inclusion of unital $C^*$-algebras
                  and, $E: \mA \to \mC$ and $F: \mC\to \mB$ are finite-index conditional
                  expectations with right (resp., left) quasi-bases
                  $\{\lambda_i\}$ and $\{\mu_j\}$, respectively, then
                  $\{\lambda_i \mu_j\}_{i,j}$ (resp., $\{
                  \mu_j\lambda_i\}_{j, i }$) is a right (resp., a
                  left) quasi-basis for $F \circ E$. (\cite[Proposition 1.7.1]{Wat})

                  In particular, the composition of two finite-index
                  conditional expectations $\mA \stackrel{E}{\to} \mC$ and $\mC
                  \stackrel{F}{\to} \mB$ is also a finite-index
                  conditional expectation and, moreover,  if $\mathrm{Ind}_W(F) \in \mZ(\mA)$, then it follows readily that
                  \[
                  \mathrm{Ind}_W(F \circ E) = \mathrm{Ind}_W(F)
                  \mathrm{Ind}_W(E).
                  \]
        \end{enumerate}
        \end{remark}

\subsubsection{Watatani's $C^*$-basic construction}

We now briefly recall the theory of \textit{$C^*$-basic construction}
introduced by Watatani in \cite{Wat}.

Consider an inclusion $\mB \subset \mA$ of unital $C^*$-algebras  with
common unit and a faithful conditional expectation $E: \mA \to \mB$.
Then, $\mA$ becomes a pre-Hilbert $\mB$-module with respect to the
$\mB$-valued inner product given by
  \begin{equation}\label{B-valued}
  \langle x, y\rangle_{\mB}=E(x^*y), x, y\in \mA.
  \end{equation}
Let $\mathfrak{A}$ denote the Hilbert $\mB$-module completion of $\mA$
and $\iota:\mA\rightarrow \mathfrak{A}$ denote the isometric inclusion
map with respect to the norm $\|x\|_\mA:=\|E(x^*x)^{1/2}\|$, $x \in
\mA$. The space $\mathcal{L}_{\mB}(\mathfrak{A})$ consisting of
adjointable $\mB$-linear maps on $\mathfrak{A}$ is a unital
$C^*$-algebra.

  For each $a \in \mA$, consider $\lambda(a)\in
  \mathcal{L}_\mB(\mathfrak{A})$ given by
  $\lambda(a)\big(\iota(x)\big)=\iota(ax)$ for $x\in \mA$. The map
  $\iota (\mA) \ni \iota(x) \mapsto \iota (E(x))\in \iota (\mA)$
  extends to an adjointable projection on $\mathfrak{A}$, and is
  denoted by $e_1\in \mathcal{L}_\mB(\mathfrak{A})$. The projection
  $e_1$ is called the Jones projection for the inclusion $\mB \subset
  \mA$; thus, $e_1(\iota(x))=\iota(E(x))$ for all $x \in
  \mA$. Moreover, $e_1 \in \lambda(\mB)'$ and it satisfies
  the relation $e_1\lambda(x) e_1 = \lambda(E(x)) e_1$ for all $x \in
  \mA$.

  The Watatani's $C^*$-basic construction for the inclusion $\mB
  \subset \mA$ with respect to the conditional expectation $E$ is
  defined as the $C^*$-subalgebra
  \[
\mA_1=  \overline{\mathrm{span}} \{\lambda(x) e_1
  \lambda(y) :x,y\in \mA\} \text{ of } \mathcal{L}_{\mB}(\mathfrak{A}).
  \]
  Also, $\lambda$ is an injective $*$-homomorphism and thus we can
  consider $\mA$ as a $C^*$-subalgebra of $\mA_1$. \color{black}

\begin{remark}\cite{Wat} \label{dual-ce}
With running notations and the indentification mentioned in the
preceding paragraph, if $E$ is a finite-index conditional expectation,
then the following hold:
   \begin{enumerate}
\item $\mA$ is complete with respect to the norm $\|\cdot\|_\mA$ - see
  \cite[Lemma 2.11]{BG2}.
\item  $\mA_1= \mathrm{span}\{x e_1 y : x, y \in \mA\} = C^*(\mA, e_1)$. (\cite[Propn. 1.3.3]{Wat}) \color{black}
\item 
  There exists a unique finite-index conditional expectation
  $\widetilde{E}: \mA_1 \to \mA$ satisfying
  \[
  \widetilde{E}\big( x e_1  y \big) =
   x \big(\mathrm{Ind}_W(E)^{-1}\big) y =
  \big(\mathrm{Ind}_W(E)^{-1} x y \big) 
  \]
 for all $x, y \in \mA$. ($\widetilde{E}$ is called the dual conditional expectation of
 $E$.)  (\cite[Prop. 1.6.1]{Wat})
  \end{enumerate}
  \end{remark}

        The following characterization of a (right) quasi-basis has
        proved to be extremely handy in the theory of subfactors and
        we have slowly started to realize that it is quite useful in
        the $C^*$-context as well. Its proof is most likely a
        folklore. For instance, its necessity is well known - see
        \cite[Lemma 2.1.6]{Wat} (also see \cite[Proposition
          2.12]{BG2}) - and its sufficiency follows on the lines of the
        proof of ($(2) \Rightarrow (3)$ of) \cite[Theorem 2.2]{Bak} by
        applying \cite[Lemma 3.7]{JOPT} (also see \cite[Lemma
          2.15]{BG2}), which is the $C^*$-analogue of the so-called
        ``Pushdown Lemma'' by Pimsner and Popa.

        \begin{proposition}\label{basis-characterization}
Let $\mB \subset \mA$ be an inclusion of unital $C^*$-algebras, $E:
\mA \to \mB$ be a finite-index conditional expectation and $\mA_1$ be
the Watatani's $C^*$-basic construction of $\mB \subset \mA$ with
respect to $E$ and, $e_1$ be the corresponding Jones projection. Then,
a finite set $\{\lambda_i : 1 \leq i \leq n\}\subset \mA$ is a (right)
quasi-basis for $E$ if and only if $\sum_{i=1}^n \lambda_i e_1
\lambda_i^* = 1$.
          \end{proposition}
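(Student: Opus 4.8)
The plan is to prove both implications by a direct computation inside $\mathcal{L}_\mB(\mathfrak{A})$, leaning only on the two defining properties of the Jones projection recalled above, namely $e_1(\iota(x)) = \iota(E(x))$ and $e_1\lambda(x)e_1 = \lambda(E(x))e_1$ for $x \in \mA$, together with $e_1 \in \lambda(\mB)'$ and the faithfulness of $E$ (recall $E(1_\mA)=1_\mA$). Throughout I keep the identification $\mA \cong \lambda(\mA)$, so that $\lambda_i e_1 \lambda_i^*$ means $\lambda(\lambda_i)\, e_1\, \lambda(\lambda_i^*)$.

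For the forward implication, suppose $\{\lambda_i\}$ is a right quasi-basis, so $x = \sum_i E(x\lambda_i)\lambda_i^*$ for all $x$. Taking adjoints and using that $E$ is a $*$-map, this is equivalent to $x = \sum_i \lambda_i E(\lambda_i^* x)$ for all $x \in \mA$. I would then evaluate the adjointable operator $\sum_i \lambda(\lambda_i)\, e_1\, \lambda(\lambda_i^*)$ on a typical vector $\iota(x)$: successively applying $\lambda(\lambda_i^*)$, then $e_1$, then $\lambda(\lambda_i)$ gives $\sum_i \lambda(\lambda_i)\, e_1\, \iota(\lambda_i^* x) = \sum_i \iota\big(\lambda_i E(\lambda_i^* x)\big) = \iota\big(\sum_i \lambda_i E(\lambda_i^* x)\big) = \iota(x)$. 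Hence $\sum_i \lambda_i e_1 \lambda_i^*$ agrees with the identity on the dense submodule $\iota(\mA)$, and being adjointable (hence bounded) it must equal $1$.

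For the converse, assume $\sum_i \lambda_i e_1 \lambda_i^* = 1$. Fixing $x \in \mA$, I would multiply this identity on the left by $e_1\lambda(x)$ and apply the relation $e_1\lambda(x\lambda_i)e_1 = \lambda(E(x\lambda_i))e_1$ to each summand, obtaining $e_1\lambda(x) = \sum_i \lambda(E(x\lambda_i))\, e_1\, \lambda(\lambda_i^*)$. Since each $E(x\lambda_i)\in \mB$ and $e_1\in\lambda(\mB)'$, the right-hand side equals $e_1\,\lambda\big(\sum_i E(x\lambda_i)\lambda_i^*\big)$. Writing $y := \sum_i E(x\lambda_i)\lambda_i^* \in \mA$, this reads $e_1\lambda(x) = e_1\lambda(y)$, and taking adjoints gives $\lambda(x^*)e_1 = \lambda(y^*)e_1$. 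The last step is to cancel the projection, and here the map $\mA \ni a \mapsto \lambda(a)e_1$ must be seen to be injective; this is precisely the cancellation built into the Pushdown Lemma (\cite[Lemma 3.7]{JOPT}, \cite[Lemma 2.15]{BG2}), and it follows at once because $\lambda(a)e_1\,\iota(z) = \iota(aE(z))$ vanishing for all $z \in \mA$ forces $a = aE(1_\mA) = 0$. Thus $x^* = y^*$, i.e. $x = \sum_i E(x\lambda_i)\lambda_i^*$, which is exactly the right quasi-basis relation.

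I expect the one genuinely load-bearing step to be the cancellation of $e_1$ in the converse: everything else is formal manipulation of the two Jones-projection identities, but passing from $\lambda(x^*)e_1 = \lambda(y^*)e_1$ back to $x^* = y^*$ is where the finite-index hypothesis enters (through $E(1_\mA)=1_\mA$ and faithfulness of $E$), and is the reason the Pushdown Lemma is invoked rather than a purely algebraic identity.
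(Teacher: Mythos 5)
Your proof is correct, and it follows essentially the route the paper itself prescribes (the paper offers no written proof, only pointers): the forward implication is exactly the computation behind \cite[Lemma 2.1.6]{Wat}, carried out by checking that $\sum_i \lambda(\lambda_i)e_1\lambda(\lambda_i^*)$ fixes $\iota(\mA)$, and your converse is the argument of $(2)\Rightarrow(3)$ of \cite[Theorem 2.2]{Bak}. The one point where you deviate, to your advantage, is the cancellation step: rather than invoking the Pushdown Lemma \cite[Lemma 3.7]{JOPT} as a black box --- which would cancel $e_1$ by applying the dual expectation $\widetilde{E}$ and using invertibility of $\mathrm{Ind}_W(E)$ --- you verify injectivity of $a \mapsto \lambda(a)e_1$ directly by evaluating at $\iota(1_\mA)$, which makes the proof self-contained. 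One small correction of emphasis in your closing remark: that cancellation does not really use finite index; it needs only that $E$ is faithful (so that $\iota$ is injective) and unital, both of which hold for the faithful conditional expectations for which the basic construction is built. Finite index enters elsewhere: it guarantees faithfulness (\Cref{fi-ce-facts}), it even makes $\iota(\mA)$ all of $\mathfrak{A}$ rather than merely dense (\Cref{dual-ce}), and of course it is what makes the equivalence non-vacuous, since the existence of a quasi-basis is the definition of finite index.
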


The following result by Watatani is fundamental in nature and was used
extensively in developing the Fourier theory for inclusions of simple
unital $C^*$-algebras in \cite{BG2, BGS}.
        \begin{theorem}\cite[Corollary 2.2.14]{Wat}\label{simple-A1}
            If $\mB$ is a simple $C^*$-subalgebra of a  unital
   $C^*$-algebra $\mA$ and $E: \mA \to \mB$ is a finite-index
  conditional expectation, then the $C^*$-basic construction
  $\mA_1$ of $\mB \subset \mA$ with respect to $E$ is also simple.
\end{theorem}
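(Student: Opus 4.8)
The plan is to realize $e_1$ as a \emph{full} projection in $\mA_1$ whose corner $e_1\mA_1 e_1$ is isomorphic to the simple algebra $\mB$, and then to invoke the elementary fact that a $C^*$-algebra carrying a full projection with simple corner is itself simple. To identify the corner, I would combine the relation $e_1 x e_1 = E(x)e_1$ (for $x\in\mA$, under the identification $\mA \ss \mA_1$) with the description $\mA_1 = \overline{\mathrm{span}}\{x e_1 y : x,y\in\mA\}$: a one-line computation gives $e_1(x e_1 y)e_1 = E(x)E(y)e_1$, whence $e_1 \mA_1 e_1 = \mB e_1$. Since $E$ is faithful (being of finite index; see \Cref{fi-ce-facts}), the map $\mB \ni b \mapsto b e_1$ is an injective $*$-homomorphism, hence isometric; in particular $\mB e_1$ is norm-closed and $e_1\mA_1 e_1 \cong \mB$ is simple.

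Fullness of $e_1$, i.e. $\overline{\mA_1 e_1 \mA_1} = \mA_1$, is then immediate: as $\mA \ss \mA_1$, every generator $x e_1 y$ already lies in $\mA_1 e_1 \mA_1$, and passing to closed spans gives the claim. With these two facts in hand I would run the standard argument. Let $I$ be a nonzero closed two-sided ideal of $\mA_1$; the crux is to show $e_1 I e_1 \neq 0$. If it were zero, then $e_1 a e_1 = 0$ for all $a\in I$, and applying this to $a a^* \in I$ yields $(e_1 a)(e_1 a)^* = e_1 a a^* e_1 = 0$, so $e_1 a = 0$ and, symmetrically, $a e_1 = 0$; hence $e_1 I = I e_1 = 0$, which forces $\mA_1 e_1 \mA_1 \cdot I = 0$ and, by fullness, $\mA_1 \cdot I = 0$ --- contradicting $I\neq 0$. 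Thus $e_1 I e_1$ is a nonzero closed ideal of the simple algebra $e_1 \mA_1 e_1$, so $e_1 \in e_1 I e_1 \ss I$, and fullness gives $\mA_1 = \overline{\mA_1 e_1 \mA_1} \ss I$. Therefore $I = \mA_1$ and $\mA_1$ is simple.

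I do not expect a serious obstacle: the argument is essentially the Morita invariance of simplicity across a full corner, and the finite-index hypothesis enters only to guarantee faithfulness of $E$. The points deserving the most care are the precise identification $e_1 \mA_1 e_1 = \mB e_1$ --- in particular the observation that $\mB e_1$ is already norm-closed, so that no extra completion is hidden in the corner --- and the reduction $e_1 I e_1 \neq 0$; both are routine but should be stated cleanly.
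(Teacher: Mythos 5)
Your argument is correct, but note that the paper does not prove this statement at all --- it is quoted verbatim from Watatani (\cite[Corollary 2.2.14]{Wat}), so the relevant comparison is with Watatani's proof, which runs in the opposite Morita direction: using a quasi-basis $\{\lambda_i\}$ he realizes $\mA_1$ as a corner $qM_n(\mB)q$ of a matrix algebra over $\mB$, with $q=\big(E(\lambda_i^*\lambda_j)\big)_{ij}$, and then invokes the fact that hereditary subalgebras of simple $C^*$-algebras are simple (no fullness needed on that side). You instead exhibit $\mB$ as a \emph{full} corner of $\mA_1$ via $e_1\mA_1 e_1=\mB e_1$, and your verification is sound: the computation $e_1(xe_1y)e_1=E(x)E(y)e_1$ is right; $b\mapsto be_1$ is an injective (by faithfulness of $E$, \Cref{fi-ce-facts} --- or simply because $\mB$ is simple and $e_1\neq 0$) $*$-homomorphism, hence isometric, so $\mB e_1$ is closed and no completion hides in the corner; and the reduction $e_1Ie_1\neq 0$ via $e_1aa^*e_1=0\Rightarrow e_1a=0$ is the standard hereditary argument. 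Two small remarks: the closedness of $e_1Ie_1$, which you use implicitly when applying simplicity of the corner, is automatic since $e_1Ie_1=I\cap e_1\mA_1e_1$ (or one may pass to the closure, which still sits in $I$); and under the finite-index hypothesis fullness of $e_1$ is even algebraic, since $\sum_i\lambda_i e_1\lambda_i^*=1$ by \Cref{basis-characterization}, which also shortens the final step $e_1\in I\Rightarrow 1\in I$. What each approach buys: Watatani's realization $\mA_1\cong qM_n(\mB)q$ is a stronger structural statement (finite generation and projectivity of the module), reusable elsewhere, whereas your proof is self-contained, uses only the relations $e_1xe_1=E(x)e_1$ and $\mA_1=\overline{\mathrm{span}}\{xe_1y\}$, and in fact only needs faithfulness of $E$ rather than finite index, so it proves simplicity of the span-closure algebra in greater generality.
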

\subsection{Markov trace and two-sided quasi-basis}

Recall that, for a unital inclusion $Q \subset P$ of finite dimensional
$C^*$-algebras with inclusion matrix $\Lambda$, a tracial state $\tau$
on $P$ with (column) trace vector $\bar{t}$ is said to be a Markov trace for
the inclusion $Q \subset P$  with modulus $\beta>0$ if $\Lambda^t
\Lambda \bar{t} = \beta \bar{t}$. If $Q \subset P$ is a connected
unital inclusion, then there exists a unique Markov trace for the
inclusion $Q \subset P$ with modulus $\|\Lambda\|^2$. For more on
Markov traces, see \cite{JS, GHJ}.

\begin{remark}\label{Markov-trace-vector}
  If $P$ is a finite dimensional $C^*$-algebra with dimension vector
  $[n_1 , \ldots, n_k]$, then the inclusion matrix of any unital
  inclusion $\C \subset P$ is given by $\Lambda = [n_1, \ldots,
    n_k]\in M_{1, k}(\N)$ and the unique Markov trace for this
  inclusion with modulus $\|\Lambda \|^2$ has trace vector
  $\bar{t}=(\frac{n_1}{d}, \ldots, \frac{n_k}{d})^t$, where $d :=
  \|\Lambda\|^2 = \mathrm{dim}(P)$.
  \end{remark}

The first part of the following observation was essentially made in
\cite{BG1} and was derived by employing the notion of path algebras
associated to an inclusion of finite-dimensional $C^*$-algebras
(introduced independently by Ocneanu and Sunder).  We rephrase and
reproduce some portion of it to suit the requirements of this article.
(We must mention that the approach of path algebras was employed by
Watatani as well to provide an example of a (self-adjoint and hence
two-sided) quasi-basis for the trace-preserving conditional
expectation from a finite dimensional $C^*$-algebra onto a subalgebra
- see \cite[Lemma 2.4]{Wat}.)
    \begin{proposition}\cite{BG1}\label{tr-2-sided-basis}
      Let $P$ be a finite dimensional $C^*$-algebra and $\tr$ be a
      faithful tracial state on $P$. Then, there exists a two-sided
      orthonormal quasi-basis for $\tr$ and
         \[
      \mathrm{Ind}_W(\tr) =\sum_{i=1}^k \frac{n_i^2}{\tr(p_i)} p_i, 
      \]
      where $\{p_i, 1\leq i \leq k\}$ is the set of minimal central
      projections of $P$ with $n_i^2 =\mathrm{dim}(p_i\mA)$ for all $1
      \leq i \leq k$.

Moreover, $\mathrm{Ind}_W(\tr)$ is a scalar if and only if
  $\tr$ is the Markov trace for the inclusion $\C \subset P$ with
  modulus $\mathrm{dim}(P)$. And, in that case, $\mathrm{Ind}_W(\tr) =
  \mathrm{dim}(P)$.
    \end{proposition}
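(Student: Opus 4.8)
The plan is to fix the Wedderburn decomposition $P \cong \bigoplus_{i=1}^k M_{n_i}(\C)$, under which each minimal central projection $p_i$ is the unit of the $i$-th block and $\dim(p_iP)=n_i^2$, and then to reduce the whole statement to an explicit computation with matrix units. Viewing $\tr$ as the (unital, faithful) conditional expectation from $P$ onto $\C 1_P$, I would first record that, since every trace on a matrix block is a scalar multiple of the usual unnormalised trace $\Tr_i$ on that block, the faithful tracial state $\tr$ is of the form $\tr=\sum_{i=1}^k s_i\,\Tr_i$ for uniquely determined weights $s_i>0$; evaluating on $p_i$ gives the bookkeeping identity $s_i=\tr(p_i)/n_i$, which is all I will need to translate between $\tr$ and the $s_i$.

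For the construction, let $\{e^{(i)}_{j\ell}:1\le j,\ell\le n_i\}$ denote the standard matrix units of the $i$-th block and put
\[
\lambda^{(i)}_{j\ell}:=s_i^{-1/2}\,e^{(i)}_{j\ell}.
\]
I claim the finite family $\{\lambda^{(i)}_{j\ell}\}$ is the desired two-sided orthonormal quasi-basis. The one identity that really needs checking is the right quasi-basis relation $x=\sum_{i,j,\ell}\tr(x\lambda^{(i)}_{j\ell})(\lambda^{(i)}_{j\ell})^*$; since matrix units from different blocks annihilate one another, this collapses to a single-block computation in which $\tr(x\,e^{(i)}_{j\ell})=s_i\,(x_i)_{\ell j}$ recovers the matrix entries of $x$ and the chosen scaling $s_i^{-1}$ cancels the weight precisely. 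Because the family is self-adjoint as a set (the adjoint of $\lambda^{(i)}_{j\ell}$ is $\lambda^{(i)}_{\ell j}$), it is automatically a two-sided quasi-basis, as noted immediately after the definition. Orthonormality on both sides is then the elementary trace computation
\[
\tr\big((\lambda^{(i)}_{j\ell})^*\lambda^{(i')}_{j'\ell'}\big)=\delta_{ii'}\delta_{jj'}\delta_{\ell\ell'}=\tr\big(\lambda^{(i)}_{j\ell}(\lambda^{(i')}_{j'\ell'})^*\big),
\]
the cross-block terms vanishing for the same reason as above. For the index formula I would simply evaluate
\[
\mathrm{Ind}_W(\tr)=\sum_{i,j,\ell}\lambda^{(i)}_{j\ell}(\lambda^{(i)}_{j\ell})^* =\sum_{i,j,\ell}s_i^{-1}e^{(i)}_{jj} =\sum_{i}\frac{n_i}{s_i}\,p_i =\sum_{i=1}^k\frac{n_i^2}{\tr(p_i)}\,p_i,
\]
using $e^{(i)}_{j\ell}e^{(i)}_{\ell j}=e^{(i)}_{jj}$ and $\sum_j e^{(i)}_{jj}=p_i$, which is exactly the asserted expression.

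Finally, for the ``moreover'' clause, since the $p_i$ are the minimal central projections, this element is a scalar multiple of $1=\sum_i p_i$ if and only if all the coefficients $n_i^2/\tr(p_i)$ coincide with a common value $\beta$, i.e. $\tr(p_i)=n_i^2/\beta$ for every $i$; summing and using $\tr(1)=1$ forces $\beta=\sum_i n_i^2=\dim(P)$, so in that case $\mathrm{Ind}_W(\tr)=\dim(P)$ and the trace vector of $\tr$ has $i$-th entry $\tr(p_i)/n_i=n_i/\dim(P)$. By \Cref{Markov-trace-vector} this is precisely the trace vector of the Markov trace for $\C\subset P$ with modulus $\dim(P)$, and the computation reverses verbatim, giving the stated equivalence. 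I do not expect a genuine obstacle here: the argument is essentially bookkeeping, and the only point demanding care is fixing the $s_i^{-1/2}$ normalisation so that the quasi-basis and orthonormality identities hold simultaneously, together with keeping straight the factor $n_i$ relating $\tr(p_i)$ to the trace-vector entry when matching the scalar-index criterion to the Markov condition.
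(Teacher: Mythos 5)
Your proof is correct and is essentially the paper's argument: your normalisation $s_i^{-1/2}=\sqrt{n_i/\tr(p_i)}$ produces exactly the quasi-basis $\bigl\{\sqrt{n_i/\tr(p_i)}\,e^{(i)}_{(\kappa,\beta)}\bigr\}$ that the paper exhibits, and your index formula and Markov-trace equivalence are the same computations. The only (immaterial) difference is that you verify the quasi-basis and orthonormality identities directly with matrix units and settle the converse by using $\tr(1)=1$ to force $\beta=\dim(P)$ and then matching trace vectors via \Cref{Markov-trace-vector}, whereas the paper outsources the existence of the two-sided orthonormal quasi-basis to the path-algebra argument of \cite[Proposition 3.3]{BG1} and checks the eigenvector equation $\Lambda^t\Lambda\bar{s}=\|\Lambda\|^2\,\bar{s}$ explicitly.
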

    \begin{proof}
    
    That $\tr$ admits a two-sided orthonormal quasi-basis follows
    verbatim on the lines of the proof of \cite[Proposition 3.3]{BG1}
    (see also \cite[Lemma 3.11]{BG3}).

 Moreover, from the proof of \cite[Proposition 3.3]{BG1}, it also
    follows that there exists a system of matrix units
    $\{e^{(i)}_{(\kappa, \beta)}:1 \leq \kappa, \beta \leq n_i, 1 \leq
    i \leq k\}$ for $P$ such that
    \[
    \Big\{
    \sqrt{\frac{n_i}{\tr(p_i)}}e^{(i)}_{(\kappa, \beta)}: 1
    \leq \kappa, \beta \leq n_i, 1\leq i\leq k\Big\}
    \]
    is a two-sided orthonormal quasi-basis for $\tr$. Hence,
         \[
         \mathrm{Ind}_W(\tr) = \sum_{i = 1}^k
         \sum_{\{1 \leq \kappa, \beta \leq n_i\}}
         \frac{\sqrt{n_i}}{\sqrt{\tr(p_i)}}e^{(i)}_{(\kappa,
           \beta)}\frac{\sqrt{n_i}}{\sqrt{\tr(p_i)}}(e^{(i)}_{(\kappa,
           \beta)})^* =\sum_{i=1}^k \frac{n_i^2}{\tr(p_i)} p_i .
         \]

Next, suppose that $\tr$ is the Markov trace for the inclusion $\C
\subset P$ with modulus $d = \mathrm{dim}(P)$, then its trace vector
is given by $\bar{t} = (\frac{n_1}{d}, \ldots,
\frac{n_k}{d})^t$. Thus, $\tr(p_i) = \frac{n_i^2}{d}$ for all
$i$. Hence,
\[
\mathrm{Ind}_W(\tr) = \sum_{i=1}^k \frac{n_i^2}{\tr(p_i)} p_i =
\sum_{i=1}^k d p_i = d .
\]

Conversely, suppose that $\mathrm{Ind}_W(\tr)$ is a
scalar, say, $c>0$. Then $\sum_{i=1}^k \frac{n_i^2}{\tr(p_i)} p_i = c
= \sum_{i=1}^k c p_i$, which implies that $\frac{n_i^2}{\tr(p_i)} = c$
for all $i$; so that, $\tr(p_i) = \frac{n_i^2}{c}$ for all $i$. So,
$\bar{s}:=(\frac{n_1}{c}, \ldots, \frac{n_k}{c})^t$ is the trace
vector of $\tr$. Also, the inclusion matrix for $\C \subset P$ is
given by $\Lambda = [n_1, \ldots, n_k]$ so that $ \|\Lambda\|^2 =
\sum_{i=1}^k n_i^2 = \mathrm{dim}(P)$. Further,   we observe that $\Lambda^t
\Lambda\bar{s} = \|\Lambda\|^2\, \bar{s}$, i.e., $\tr$ is the Markov
trace for the inclusion $\C \subset P$ with modulus
$\mathrm{dim}(P)$. (Further, by its uniqueness, it follows that
$\bar{s} = \bar{t}$ and hence that $c = d$.) 
    \end{proof}
 The preceding observation is relatable to \cite[Proposition 2.4.2 and
   Corollary 2.4.3]{Wat} as well.

    \subsection{ Minimal conditional expectation}
 Note that, for an inclusion $\mB \subset \mA$ of unital
 $C^*$-algebras, if $\mZ(\mA) = \C$, then every finite-index
 conditional expectation from $\mA$ onto $\mB$ has a scalar Watatani
 index. Recall from \cite{Wat} that a conditional expectation is said
 to be minimal if it has the smallest Watatani index.  In general, a
 minimal conditional expectation from $\mA$ onto $\mB$ need not
 exist. However,  when it comes to inclusions of
 simple unital $C^*$-algebras, a minimal conditional expectation
 exists and is, in fact, unique.

        \begin{theorem}\cite[Theorem 2.12.3]{Wat}\label{minimal}
Let $\mB \subset \mA$ be an inclusion of simple unital $C^*$-algebras with
a finite-index conditional expectation from $\mA$ onto $\mB$. Then, there
exists a unique minimal conditional expectation from $ \mA$ onto $ \mB$
(which is usually denoted by $E_0$).
  \end{theorem}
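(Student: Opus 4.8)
The plan is to fix one reference finite-index conditional expectation and to parametrize \emph{all} the others by positive invertible elements of the relative commutant $\mC_\mA(\mB) = \{x \in \mA : xb = bx \text{ for all } b \in \mB\}$, thereby reducing the search for a minimal expectation to the minimization of a single positive \emph{scalar} over a compact convex set. Since $\mA$ is simple, $\mZ(\mA) = \C$, so the Watatani index of every finite-index conditional expectation is a positive scalar and ``minimal'' unambiguously means the smallest value of this scalar. Fix a finite-index conditional expectation $E_\ast : \mA \to \mB$ with quasi-basis $\{\lambda_i : 1 \leq i \leq n\}$. For each positive invertible $d \in \mC_\mA(\mB)$ with $E_\ast(d) = 1$ (note that $E_\ast(d) \in \mZ(\mB) = \C$, since $d$ commutes with $\mB$ and $\mB$ is simple), set $F_d(x) := E_\ast(d^{1/2} x d^{1/2})$.

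A direct check shows $F_d$ is a conditional expectation onto $\mB$: positivity is clear, $\mB$-bimodularity uses $d \in \mC_\mA(\mB)$, and $F_d(1) = E_\ast(d) = 1$ while $F_d(b) = E_\ast(bd) = b E_\ast(d) = b$ for $b \in \mB$. The first key input I need is a Radon--Nikodym theorem in the opposite direction: \emph{every} finite-index conditional expectation $F : \mA \to \mB$ is of the form $F = F_d$ for some positive invertible $d \in \mC_\mA(\mB)$ with $E_\ast(d) = 1$ (finiteness of the index forcing $d$ to be invertible). This surjectivity of the parametrization is the genuinely hard part of the argument, and I would flag it as the main obstacle.

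Granting this, I would next compute the index as an explicit function of $d$. Using $d \in \mC_\mA(\mB)$ and the quasi-basis property of $\{\lambda_i\}$, one verifies that $\{\lambda_i d^{-1/2}\}$ is a quasi-basis for $F_d$, since
\[
\sum_i F_d(x \lambda_i d^{-1/2})(\lambda_i d^{-1/2})^* = \sum_i E_\ast(d^{1/2} x \lambda_i)\, d^{-1/2}\lambda_i^* = d^{-1/2}\sum_i E_\ast(d^{1/2} x \lambda_i)\lambda_i^* = d^{-1/2}(d^{1/2} x) = x .
\]
Hence $\mathrm{Ind}_W(F_d) = \sum_i \lambda_i d^{-1}\lambda_i^* = \Psi(d^{-1})$, where $\Psi : \mC_\mA(\mB) \to \mZ(\mA) = \C$ is the positive linear functional $\Psi(h) := \sum_i \lambda_i h \lambda_i^*$. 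The quasi-basis property makes $\Psi$ faithful: if $\Psi(c^*c) = 0$ then $c\lambda_i^* = 0$ for all $i$, and then $c = c\sum_i E_\ast(\lambda_i)\lambda_i^* = \sum_i E_\ast(\lambda_i)\, c\lambda_i^* = 0$ (using $c \in \mC_\mA(\mB)$ to commute $c$ past $E_\ast(\lambda_i) \in \mB$). Thus minimizing the index amounts to minimizing $\Phi(d) := \Psi(d^{-1})$ over the convex set $K := \{d \in \mC_\mA(\mB) : d \geq 0 \text{ invertible}, \ E_\ast(d) = 1\}$.

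Finally I would run a convex-optimization argument, for which I use that $\mC_\mA(\mB)$ is finite-dimensional, a standard feature of finite-index inclusions of simple $C^*$-algebras. On this finite-dimensional space the set $\overline{K} = \{d \geq 0 : E_\ast(d) = 1\}$ is compact (faithfulness of $E_\ast$ bounds $\|d\|$), and $\Phi$ is coercive: since $\Psi$ is a faithful positive functional one has $\Psi(h) \geq \varepsilon\|h\|$ for $h \geq 0$, so as $d$ approaches a non-invertible boundary point, $\|d^{-1}\| \to \infty$ and hence $\Phi(d) = \Psi(d^{-1}) \to \infty$; a minimizer therefore exists in the interior $K$. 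Uniqueness follows from strict convexity: as $t \mapsto t^{-1}$ is operator convex, $\Phi((1-t)d_0 + t d_1) \leq (1-t)\Phi(d_0) + t\Phi(d_1)$, with equality (for $t \in (0,1)$) forcing $\Psi(S) = 0$ for the positive element $S = (1-t)d_0^{-1} + t d_1^{-1} - ((1-t)d_0 + t d_1)^{-1}$; faithfulness of $\Psi$ then gives $S = 0$, and strict operator convexity of inversion gives $d_0 = d_1$. Let $d_0$ be the unique minimizer and set $E_0 := F_{d_0}$. By the Radon--Nikodym input, any minimal conditional expectation equals some $F_d$ with $\Phi(d)$ minimal, whence $d = d_0$; this yields both existence and uniqueness of the minimal conditional expectation $E_0$.
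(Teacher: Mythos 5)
The paper offers no proof of this statement at all --- it imports it verbatim from Watatani \cite[Theorem 2.12.3]{Wat} --- so the comparison is really with the argument in that source, and your outline is essentially a reconstruction of it: Watatani, following Hiai's treatment of minimal expectations for subfactors, likewise parametrizes the finite-index conditional expectations by positive invertible elements of the finite-dimensional relative commutant and minimizes the resulting scalar-valued index function. Your execution of the optimization half is sound: the quasi-basis $\{\lambda_i d^{-1/2}\}$ computation, the formula $\mathrm{Ind}_W(F_d)=\Psi(d^{-1})$, the faithfulness of $\Psi$, compactness and coercivity on $\overline{K}$, and the strict-convexity uniqueness argument all go through. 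One point you use silently and should justify: that $\Psi(h)=\sum_i\lambda_i h\lambda_i^*$ is a \emph{scalar} for $h\in\mC_\mA(\mB)$; this follows from the double expansion $\Psi(h)\,a=\sum_{i,j}\lambda_i h E_\ast(\lambda_i^*a\lambda_j)\lambda_j^*=a\,\Psi(h)$ (moving $E_\ast(\lambda_i^*a\lambda_j)\in\mB$ past $h$), so $\Psi(h)\in\mZ(\mA)=\C$.

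The genuine gap is exactly the one you flagged: the surjectivity of $d\mapsto F_d$ onto all finite-index conditional expectations. Since both existence (``minimal among \emph{all} finite-index expectations'') and uniqueness quantify over all $F$ and not just over the family $\{F_d\}$, the proof is incomplete as written. The input is true, however, and can be supplied with a tool the paper itself quotes elsewhere, namely Watatani's Lemma 2.12.2 (two finite-index conditional expectations from $\mA$ onto $\mB$ that agree on $\mB'\cap\mA$ coincide; this is invoked in \Cref{IMS-F}). Indeed, since $\mB$ is simple, any finite-index $F:\mA\to\mB$ restricts to a faithful state $\psi$ on the finite-dimensional $C^*$-algebra $R:=\mC_\mA(\mB)$ (note $F(R)\subseteq\mZ(\mB)=\C$), as does $\phi:={E_\ast}|_R$. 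Writing $\phi=\Tr_R(D_\phi\,\cdot)$ and $\psi=\Tr_R(D_\psi\,\cdot)$ with $D_\phi, D_\psi$ positive invertible in $R$, the condition $F=F_d$ on $R$ becomes the Riccati equation $d^{1/2}D_\phi d^{1/2}=D_\psi$, whose unique positive invertible solution in $R$ is
\[
d^{1/2}\;=\;D_\phi^{-1/2}\bigl(D_\phi^{1/2}D_\psi D_\phi^{1/2}\bigr)^{1/2}D_\phi^{-1/2},
\]
and then $E_\ast(d)=\Tr_R(D_\phi d)=\Tr_R(d^{1/2}D_\phi d^{1/2})=\Tr_R(D_\psi)=1$. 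Thus $F$ and $F_d$ are two finite-index expectations (finiteness of $\mathrm{Ind}_W(F_d)$ being guaranteed by your explicit quasi-basis) agreeing on $\mB'\cap\mA$, so $F=F_d$ on all of $\mA$ by \cite[Lemma 2.12.2]{Wat}. With that lemma supplied, your argument becomes a complete proof and is faithful to the strategy of the cited source.
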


\begin{definition} \cite{Wat}
  Let $\mB \subset \mA$ be as in the preceding theorem. Then, the Watatani index of the inclusion $\mB \subset \mA$ is defined as \[
  [\mA:\mB]_0 = \mathrm{Ind}_W(E_0).
  \]
\end{definition}

 \begin{remark}\label{dual-index}
   For $\mA$, $\mB$ and $E_0$ as in \Cref{minimal}, \\ (a) the
   $C^*$-basic construction $\mA_1$ is also simple - see \cite[Corollary
     2.2.14]{Wat});\\ (b) the dual conditional expectation
   $\widetilde{E}_0: \mA_1 \to \mA$ is also minimal  - see \cite[Corollary
     3.4]{KW}); and,\\ (c) $\mathrm{Ind}_W
   (E_0) = \mathrm{Ind}_W (\widetilde{E}_0)$  - see \cite[Proposition
     2.3.4]{Wat}.
 \end{remark}

 For any inclusion $\mB \subset \mA$ of algebras, recall that the centralizer of
 $\mB$ in $\mA$ is given by \[
 \mC_\mA(\mB) = \{ x \in \mA: xb = bx
 \text{ for all } b \in \mB\},
 \]
 which is also called the relative
 commutant of $\mB$ in $\mA$ and denoted by $\mB'\cap \mA$.
 
\begin{proposition}\label{E0-restriction-to-C-two-sided} 
Let $\mB \subset \mA$ be an inclusion of simple unital $C^*$-algebras with a
finite-index conditional expectation from $\mA$ onto $\mB$ and let $E_0: \mA
\to \mB$ denote the unique minimal conditional expectation as in
\Cref{minimal}. Then, the following hold:
\begin{enumerate}
    \item \cite{Wat} For any quasi-basis $\{\lambda_i : 1 \leq i \leq
      n\}$ of $E_0$,
    \[
    E_0(x) =  \frac{1}{[\mA:\mB]_0}\sum_{i=1}^n \lambda_i x \lambda_i^* \text{ for all
  } x \in \mC_\mA(\mB),
  \]
   $E_0(\mC_\mA(\mB)) = \C$ and $\tau_0:= {E_0}_{\restriction_{\mC_\mA(\mB)}}$
  is a faithful tracial state on $\mC_\mA(\mB)$. 
\item Let $\mC:= C^*(\mB \cup \mC_\mA(\mB) )$. Then, the conditional expectation
  ${E_0}_{\restriction_C}:C\to \mB$ admits a two-sided orthonormal
  quasi-basis contained in $\mC_\mA(\mB)$.
  \end{enumerate}
    \end{proposition}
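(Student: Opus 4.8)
The plan is to handle the two assertions separately, the first being essentially Watatani's and the second carrying the new content. For Part~(1) I would invoke the theory of the minimal expectation: the displayed averaging formula for $E_0$ on the relative commutant and the traciality of $\tau_0$ are precisely the characterizing features of the minimal conditional expectation of an inclusion of simple unital $C^*$-algebras (cf. \cite{Wat} and \Cref{minimal}). The one point I would spell out is that $E_0(\mC_\mA(\mB))=\C$: since $E_0$ is a $\mB$-bimodule map, for $x\in\mC_\mA(\mB)$ and $b\in\mB$ one has $bE_0(x)=E_0(bx)=E_0(xb)=E_0(x)b$, whence $E_0(x)\in\mZ(\mB)=\C$ because $\mB$ is simple; as $E_0(1_\mA)=1_\mA$ this gives equality, and the faithfulness of $\tau_0$ is inherited from that of the finite-index expectation $E_0$ (\Cref{fi-ce-facts}).

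For Part~(2) the strategy is to first produce the quasi-basis \emph{inside} $\mC_\mA(\mB)$ as a quasi-basis for the scalar-valued trace $\tau_0$, and then show that the very same set serves $E_0|_\mC$. I would begin by recording that $\mC_\mA(\mB)=\mB'\cap\mA$ is finite-dimensional; this is the $C^*$-analogue of the subfactor bound $\dim(N'\cap M)\le[M:N]$. Concretely, given a $\tau_0$-orthonormal family $\{x_i\}\subset\mC_\mA(\mB)$, the relation $e_1x_i^*x_je_1=E_0(x_i^*x_j)e_1=\tau_0(x_i^*x_j)e_1=\delta_{ij}e_1$ shows that $q_i:=x_ie_1x_i^*\in\mA_1$ are mutually orthogonal projections; applying the dual expectation $\widetilde{E}_0$ with $\widetilde{E}_0(e_1)=[\mA:\mB]_0^{-1}$ (\Cref{dual-ce}, the index being scalar as $\mA$ is simple) to $\sum_iq_i\le 1$ yields $\sum_i x_ix_i^*\le[\mA:\mB]_0$, and then the trace property $E_0(x_ix_i^*)=\tau_0(x_i^*x_i)=1$ forces the family to have at most $[\mA:\mB]_0$ elements; hence $\dim_\C\mC_\mA(\mB)<\infty$. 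Now \Cref{tr-2-sided-basis}, applied to the faithful tracial state $\tau_0$ on the finite-dimensional $C^*$-algebra $\mC_\mA(\mB)$, supplies a two-sided orthonormal quasi-basis $\{\mu_j\}\subset\mC_\mA(\mB)$ for $\tau_0\colon\mC_\mA(\mB)\to\C$, i.e. $x=\sum_j\tau_0(x\mu_j)\mu_j^*=\sum_j\tau_0(x\mu_j^*)\mu_j$ for all $x\in\mC_\mA(\mB)$, with $\tau_0(\mu_i^*\mu_j)=\tau_0(\mu_i\mu_j^*)=\delta_{ij}$.

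It then remains to verify that $\{\mu_j\}$ is a two-sided orthonormal quasi-basis for $E_0|_\mC\colon\mC\to\mB$. The structural observation that makes this work is that, because $\mB$ and $\mC_\mA(\mB)$ commute elementwise, the linear span $\mB\cdot\mC_\mA(\mB)$ of products $bc$ ($b\in\mB$, $c\in\mC_\mA(\mB)$) is already a $*$-subalgebra (any word collapses to such a product, and $(bc)^*=b^*c^*$) whose closure is all of $\mC=C^*(\mB\cup\mC_\mA(\mB))$. Since both sides of each quasi-basis identity depend norm-continuously on their argument, it suffices to check them on $y=bc$. Using the left $\mB$-linearity of $E_0$ and the fact that $E_0$ is scalar-valued (equal to $\tau_0$) on $\mC_\mA(\mB)$, one computes
\[
\sum_j E_0\big(bc\,\mu_j\big)\mu_j^* = b\sum_j \tau_0(c\mu_j)\mu_j^* = bc = y,
\]
and symmetrically $\sum_j E_0\big(bc\,\mu_j^*\big)\mu_j = bc$, which are the right and left quasi-basis relations; orthonormality transfers verbatim since $E_0(\mu_i^*\mu_j)=\tau_0(\mu_i^*\mu_j)=\delta_{ij}$ and likewise $E_0(\mu_i\mu_j^*)=\delta_{ij}$. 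As $\{\mu_j\}\subset\mC_\mA(\mB)$, this is exactly the asserted quasi-basis.

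The main obstacle I anticipate lies not in the transfer computation — which is clean once the ingredients are assembled — but in the two inputs underpinning it: the finite-dimensionality of $\mC_\mA(\mB)$, which is what allows \Cref{tr-2-sided-basis} to apply and which rests on the Jones-projection/dual-expectation estimate above together with the traciality of $\tau_0$; and the identification of $\mC$ with the closure of the commuting product $\mB\cdot\mC_\mA(\mB)$. The conceptual point is that elementwise commutation is precisely what collapses arbitrary words into single products $bc$ and, in tandem with the left $\mB$-linearity of $E_0$, lets the scalar-valued quasi-basis for $\tau_0$ pass unchanged through the $\mB$-valued expectation.
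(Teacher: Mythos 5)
Your proposal is correct and follows essentially the same route as the paper: part~(1) is quoted from Watatani's characterization of the minimal expectation, and part~(2) takes a two-sided orthonormal quasi-basis $\{\mu_j\}\subset\mC_\mA(\mB)$ for $\tau_0$ from \Cref{tr-2-sided-basis} and transfers it to ${E_0}_{\restriction_\mC}$ via the $\mB$-bimodule property of $E_0$ and the density of $\mathrm{span}\{bc: b\in\mB,\ c\in\mC_\mA(\mB)\}$ in $\mC$, exactly as in the paper. Your only addition is the explicit Jones-projection/dual-expectation argument for $\dim\mC_\mA(\mB)<\infty$ (which is sound), a fact the paper leaves implicit since it is standard from \cite{Wat}.
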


\begin{proof}

(1):  Let $\{\lambda_i: 1 \leq i \leq n\}$ be a quasi-basis for
  $E_0$. Since $E_0$ is minimal, it follows from \cite[Proposition
    1.2.9 and Theorem 2.12.3]{Wat} that
    \begin{enumerate}[(a)] 
    \item $E_0(x) =  \frac{1}{[\mA:\mB]_0}\sum_{i=1}^n \lambda_i x \lambda_i^*$ for all
  $x \in \mC_\mA(\mB)$ (in particular, this expression is independent of the quasi-basis $\{\lambda_i\}$); 
  \item ${E_0}(\mC_\mA(\mB)) = \mZ(\mB) =\C$; and, 
  \item  $\tau_0:={E_0}_{_{\restriction_{\mC_\mA(\mB)}}}:\mC_\mA(\mB) \to \C$ is a
  faithful tracial state.
    \end{enumerate}

    (2): Let $\{\mu_j\}$ be a two-sided orthonormal quasi-basis for $\tau_0$
    in $\mC_\mA(\mB)$ as in \Cref{tr-2-sided-basis}. We assert that $\{\mu_j\}$ is a two-sided quasi-basis for
    ${E_0}_{\restriction_{\mC}}$ as well. Note that, for any $z
    \in \mC_\mA(\mB)$ and $b \in \mB$, $\{\mu_j\}$ being a right quasi-basis
    for $\tau_0$, we have
    \[
    zb= \sum_j \tau_0(z\mu_j)\mu_j^*b =  \sum_j E_0(z\mu_j)\mu_j^*b = \sum_j E_0(zb\mu_j)\mu_j^*;
    \]
and, $\{\mu_j\}$ being a left quasi-basis for $\tau_0$, we have
 \[
    zb= \sum_j \tau_0(z\mu_j^*)\mu_jb =  \sum_j E_0(z\mu_j^*)\mu_jb = \sum_j E_0(zb\mu_j^*)\mu_j.
    \]
Since $\mC= \overline{\mathrm{span}}\{zb: z \in \mC_\mA(\mB), b\in \mB\}$, it
follows that
\[
\sum_j E_0( w \mu_j)\mu_j^* = w = \sum_j(w \mu_j^*) \mu_j \text{ for
  all } w \in \mC.
\]
Hence, $\{\mu_j\}$ is a two-sided orthonormal quasi-basis for
${E_0}_{\restriction_{\mC}}$.
\end{proof}

Watatani had realized in \cite{Wat} itself that the minimal conditional
expectation is characterized by the tracial property on the
centralizer algebra $\mC_\mA(\mB)$. This observation allowed us in
\cite{BG2} to obtain a sequence of consistent tracial states on the
tower of finite dimensional $C^*$-algebras
\[
\C \cong \mZ(\mB) \subseteq \mC_\mA(\mB)\subseteq \mC_{\mA_1}(\mB) \subseteq
\cdots \subseteq \mC_{\mA_k}(\mB) \subseteq \cdots
\]
which then paved way for a Fourier theory on this tower of centralizer algebras 
- see \cite{BG2, BGS}. 

\subsection{Compatible intermediate $C^*$-subalgebras}

As in  \cite{IW} (also see \cite{BG2, GS}),  for an inclusion $\mB
\subset \mA$ of unital $C^*$-algebras with a finite-index conditional
expectation $E: \mB \to \mA$, let $\mathrm{IMS}(\mB, \mA, E)$ denote the set of
intermediate $C^*$-subalgebras $\mC$ of  $\mB\subset \mA$ with a
compatible conditional expectation $F: \mA \to \mC$ satisfying the
compatibility condition $ E = E_{\restriction_\mC} \circ
F$. 

We shall need the following well known elementary observations.
\begin{remark}\label{unique-compatible-ce} \label{eC-via-e1}
With notations as in the preceding paragraph, let $\mA_1$ denote the
Watatani's $C^*$-basic construction of $\mB \subset \mA$ with respect to
$E$ and $e_1$ denote the corresponding Jones projection. 
  \begin{enumerate}
\item If $\mC\in \mathrm{IMS}(\mB,\mA,E)$ with respect to two compatible
  conditional expectations $F$ and $F'$, then $F = F'$. (See \cite[Page
    3]{IW}.)
  \item If $\mC\in \mathrm{IMS}(\mB,\mA,E)$ with respect to the compatible
    conditional expectation $F: \mA \to \mC$, then $F$ has finite
    index. (See \cite[Remark 2.4]{GS}.)
\item Let $\mC\in \mathrm{IMS}(\mB, \mA, E)$ with respect to the compatible
  conditional expectation $F: \mA \to \mC$; $\mC_1$ denote the Watatani's
  $C^*$-basic construction of $\mC\subset \mA$ with respect to $F$ (with
  Jones projection $e_\mC$), and let $\{\lambda_i\}$ be a quasi-basis
  for $E_{\restriction_\mC}$. Then, $\mC_1 \subset \mA_1$ and $\sum_i
  \lambda_i^* e_1 \lambda_i = e_\mC$. (See \cite[Proposition 2.7]{GS}.)
  \end{enumerate}
\end{remark}

\subsection{Reduced  twisted crossed product}

Recall that a discrete twisted $C^*$-dynamical system is a quadruple $(\mA, G,
\alpha, \sigma)$ consisting of a unital $C^*$-algebra $\mA$, a 
discrete group $G$, a map $\alpha: G \to \mathrm{Aut}(\mA)$ and a map
$\sigma: G\times G \to \mU(\mA)$ satisfying the following identities:
\[
\alpha_g \circ \alpha_h = \sigma(g, h ) \alpha_{gh} \sigma(g,h)^*;\]
\[
\sigma (g, h) \sigma (gh, k) = \alpha_g(\sigma(h,k)) \sigma(g, hk);
\text{and,}\]
\[
\sigma (g, e) = \sigma( e, g) = 1
\]
for all $g, h, k \in G$. Such a $\sigma$ is called a normalized
$\mU(\mA)$-valued $2$-cocycle on $G$, and an $\alpha$ as above is called a
twisted action of $G$ on $\mA$ with respect to the cocycle $\sigma$.

Note that if $\sigma$ is the trivial map, i.e., $\sigma (g,h ) = 1$
for all $g, h \in G$, then $\alpha$ is a homomorphism and $(\mA, G,
\alpha)$ is a usual $C^*$-dynamical system.

We shall work with the following working definition of the reduced twisted crossed product:

For a (discrete) twisted $C^*$-dynamical system $(\mA, G, \alpha,
\sigma)$, there exists a representation $\mA \subset B(\mH)$ and an
injective map $u: G \to \mU(B(\mH))$ such that
\[
u_g u_h =
  \sigma(g, h) u_{gh},
\alpha_g(a) = u_g a
u_g^*
\]
for all $g, h \in G$ and $a \in \mA$; and, the reduced twisted crossed
product $\mA \rtimes^r_{(\alpha, \sigma)} G$ (also denoted by
$C^*_r(\mA, G, \alpha, \sigma)$) can be identified with $C^*(\mA \cup
u(G)) \subset B(\mH)$. For more on reduced twisted crossed product,
we refer the reader to \cite{B1, BO}.

\begin{remark}\label{crossed-product-facts}
With notations as in the preceding paragraph, the following aspects of
the reduced twisted crossed product will be relevant for this article:
\begin{enumerate}
\item  $\Big\{\sum_{\text{finite}} x_g u_g : x_g \in
  \mA\Big\}$  is a unital $*$-subalgebra of $\mA \rtimes^r_{(\alpha, \sigma)} G$ as
  \[
  x u_g y u_h = x \alpha_g(y) u_g u_h = x \alpha_g(y) \sigma(g, h) u_{gh}\]
  for all $x, y \in \mA$, $g,h \in G$;
  and, thus, $\Big\{\sum_{\text{finite}} x_g u_g : x_g \in
  \mA\Big\}$ is dense in $\mA \rtimes^r_{(\alpha, \sigma)} G$.

\item $(u_g)^* = u_{g^{-1}} \sigma(g, g^{-1})^* = \sigma(g, g^{-1})^*
  u_{g^{-1}}$ for all $g \in G$. (\cite[Page 5]{B1})

\item There exists a faithful conditional expectation $E: \mA
  \rtimes^r_{(\alpha, \sigma)} G \to \mA$ such that $E(u_g) = 0$ for all
  $ e \neq g \in G$. (\cite[Page 7]{B1})

\item The canonical conditional expectation is $G$-equivariant,
    i.e., $E(u_g x u_g^*) = \alpha_g (E(x))$ for all $x\in \mA$ and
    $g\in G$. (\cite[Page 8]{B1})
  
  \item If $G$ is finite, then the unital $*$-subalgebra
    $\Big\{\sum_{\text{finite}} x_g u_g : x_g \in \mA\Big\}$ is closed and hence
    \[
    \mA \rtimes^r_{(\alpha, \sigma)}G = \Big\{\sum_{g \in G} x_g u_g : x_g
    \in \mA\Big\}.
    \]
    Moreover, $\{u_g: g \in G\}$ is a quasi-basis for $E$ because if
    $x = \sum_g x_g u_g \in \mA \rtimes^r_{(\alpha, \sigma)} G$, then
    \( E(xu_g^*) = x_g\) for all $g \in G$; so that, $x = \sum_g E(x
    u_g^*) u_g$ for all $x \in \mA \rtimes^r_{(\alpha, \sigma)} G$. In
    particular, $\mathrm{Ind}_W(E) = |G|$.

        \item There is a Galois correspondence between subgroups of $G$
      and intermediate $C^*$-subalgebras of $\mA \rtimes^r_{(\alpha, \sigma)} G$.
      (\cite[Theorem 5.2]{BO})
\end{enumerate}
\end{remark}

\subsection{Some generalities}
\subsubsection{Outer automorphisms and free automorphisms}
Recall that an automorphism $\theta$ of a unital $C^*$-algebra $\mA$ is
said to be free if, for a given $y \in \mA$, $yx = \theta(x)y$ for
every $x \in \mA$ if and only if $y = 0$.

It is easily seen that a free automorphism is outer (i.e., not inner)
and it is well know that an autmorphism of a $II_1$-factor is free if
and only if it is outer. Analogous to this, it can be deduced easily
from \cite{Cho} that the same equivalence holds for any
automorphism of a unital $C^*$-algebra with trivial center. We derive
it here for the sake of convenience.

\begin{lemma}\cite{Cho}\label{outer=free}
  Let $\theta$ be an automorphism of a unital $C^*$-algebra $\mA$ with
  $\mZ(\mA) \cong \C$. Then, $\theta$ is outer if and only if it is free.
  \end{lemma}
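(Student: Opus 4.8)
The plan is to prove the equivalence by treating the two implications separately, observing that one direction is essentially formal and concentrating all the real work on the other. The direction ``free $\Rightarrow$ outer'' I would dispose of via its contrapositive: if $\theta$ is inner, say $\theta = \mathrm{Ad}(u)$ for some $u \in \mU(\mA)$, then $y := u \neq 0$ satisfies $yx = ux = (uxu^*)u = \theta(x)y$ for all $x \in \mA$, so $\theta$ is not free. (Note that this direction does not require $\mZ(\mA) \cong \C$.) The substance of the lemma is therefore the converse, ``outer $\Rightarrow$ free,'' which I would also attack by contraposition: assuming $\theta$ is \emph{not} free, I would produce a unitary $u \in \mA$ implementing $\theta$, thereby showing $\theta$ is inner.

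So suppose $\theta$ is not free. By definition there exists $y \neq 0$ in $\mA$ with $yx = \theta(x)y$ for all $x \in \mA$. First I would extract the adjoint form of this relation: taking adjoints gives $x^* y^* = y^* \theta(x)^* = y^* \theta(x^*)$, and since $x \mapsto x^*$ ranges over all of $\mA$, this says $xy^* = y^*\theta(x)$ for every $x \in \mA$. The two relations $yx = \theta(x)y$ and $xy^* = y^*\theta(x)$ are the engine of the argument. Using them I would compute, for arbitrary $x$, that $(y^*y)x = y^*(yx) = y^*\theta(x)y = (xy^*)y = x(y^*y)$, so that $y^*y$ commutes with all of $\mA$ and hence $y^*y \in \mZ(\mA) \cong \C$; write $y^*y = \lambda 1$ with $\lambda = \|y\|^2 > 0$. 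Symmetrically, $(yy^*)\theta(x) = y(y^*\theta(x)) = y(xy^*) = (yx)y^* = \theta(x)(yy^*)$, and since $\theta$ is surjective the element $\theta(x)$ runs over all of $\mA$, whence $yy^* \in \mZ(\mA) \cong \C$; write $yy^* = \mu 1$. Because $yy^*$ is positive, $\mu = \|yy^*\| = \|y\|^2 = \lambda$.

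Having both $y^*y = \lambda 1$ and $yy^* = \lambda 1$ with $\lambda > 0$, I would set $u := \lambda^{-1/2}\,y$, so that $u^*u = uu^* = 1$ and $u \in \mU(\mA)$; the original relation $yx = \theta(x)y$ then rescales to $ux = \theta(x)u$, i.e.\ $\theta(x) = uxu^*$ for all $x$, so $\theta = \mathrm{Ad}(u)$ is inner. This establishes the contrapositive of ``outer $\Rightarrow$ free'' and completes the equivalence. The only place where the hypothesis $\mZ(\mA) \cong \C$ is genuinely used is in collapsing $y^*y$ and $yy^*$ to scalars, and the one point requiring a touch of care -- which I regard as the main (though mild) obstacle -- is the centrality of $yy^*$, for which one must invoke surjectivity of $\theta$, together with the observation that the two resulting scalars coincide so that $y$ normalizes to an honest unitary rather than merely a partial isometry.
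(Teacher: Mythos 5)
Your proof is correct and follows essentially the same route as the paper: both argue by contraposition, taking a nonzero intertwiner $y$ with $yx = \theta(x)y$ for all $x \in \mA$ and rescaling it to a unitary implementing $\theta$, the triviality of the center being used exactly to collapse $y^*y$ and $yy^*$ to (equal) scalars. The only difference is that where the paper outsources the centrality of $y^*y$ and $yy^*$ to Theorem 1 of \cite{Cho}, you derive it directly from the intertwining relation and its adjoint (and you also write out the easy direction, which the paper treats as known), making the argument self-contained.
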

\begin{proof}
  We just need to show the necessity. So, let $\theta$ be outer.

  Suppose that there exists an $a \in \mA$ such that $ax = \theta(x) a$ for
  all $x \in \mA$. Then, by \cite[Theorem 1]{Cho}, $aa^*=a^*a \in \mZ(\mA)
  \cong \C$; thus, $aa^*=a^*a=\|a\|^2$. So, if $a \neq 0$,
  then $u:=\|a\|^{-1} a$ is a unitary in $\mA$ and $\theta(x) = uxu^*$ for
  all $ x \in \mA$, which contradicts the outerness of $\theta$. Hence,
  $a=0$, i.e., $\theta$ is free.
    \end{proof}

\subsubsection{Regular inclusions}

Recall that, for an inclusion $\mB \subset \mA$ of unital $C^*$-algebras
with common identity, the normalizer of $\mB$ in $\mA$ is the group of
unitaries given by
\[
\mN_\mA(\mB) = \{u \in \mU(\mA): u\mB u^* =
\mB\};
\]
and, we say that the inclusion $\mB \subset \mA$ is regular if
$\mN_\mA(\mB)$ generates the $C^*$-algebra $\mA$.

\begin{remark}
  It must be mentioned here that Kumjian and Renault use a different
  definition for the normalizer, namely, the set $\{x \in \mA: x\mB
  \subset \mB, \mB x \subset \mB\}$. And, Renault (in \cite{Ren})
  calls an inclusion $\mB \subset \mA$ to be regular if the set $\{x
  \in \mA: x\mB \subset \mB, \mB x \subset \mB\}$ generates $\mA$ as a
  $C^*$-algebra. It has been kindly pointed out to us by Renault (in a
  private communication) that both definitions are equivalent when
  $\mB \subset \mA$ are unital $C^*$-algebras.
\end{remark}

\begin{example}\label{example-regular}
  Consider a  reduced twisted crossed product $\mA \rtimes^r_{(\alpha,
  \sigma)} G$ as in \Cref{crossed-product-facts}. Since $u_g x u_g^* =
\alpha_g(x)$ for all $x \in \mA$, $g \in G$, it follows that $\{u_g : g
\in G\} \subset \mN_{(\mA \rtimes^r_{(\alpha, \sigma)} G)}(\mA)$ and hence
that $\mA \subset \mA \rtimes^r_{(\alpha, \sigma)} G$ is a regular
inclusion. 
\end{example}

\begin{theorem}  (\cite[Theorem 3.2]{B1}, \cite[Theorem 5.1]{BO})\label{bedos-results}
 With notations as in \Cref{crossed-product-facts}, if $\mA$ is simple
 and $\alpha_g$ is outer for every $e \neq g \in G$, then $\mA
 \rtimes^r_{(\alpha, \sigma)} G$ is simple and the inclusion $\mA
 \subset \mA \rtimes^r_{(\alpha, \sigma)} G$ is irreducible.
  \end{theorem}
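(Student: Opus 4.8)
The plan is to handle both assertions---simplicity of $D := \mA \rtimes^r_{(\alpha,\sigma)} G$ and irreducibility of $\mA \subset D$---through a single computational device. The starting observation is that a simple unital $C^*$-algebra has $\mZ(\mA) = \C$ (a nonscalar central self-adjoint element would produce a nontrivial spectral projection, generating a proper nonzero closed ideal), so \Cref{outer=free} upgrades the outerness of each $\alpha_g$, $g \neq e$, to \emph{freeness}: if $y \in \mA$ satisfies $yx = \alpha_g(x)y$ for all $x \in \mA$, then $y = 0$. Throughout I would use the finite-group normal form from \Cref{crossed-product-facts}(5), namely that every $x \in D$ has a unique finite expansion $x = \sum_{g \in G} x_g u_g$ with $x_g = E(x u_g^*) \in \mA$, together with the relations $u_g b = \alpha_g(b) u_g$ and $u_g u_h = \sigma(g,h) u_{gh}$.

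Irreducibility I would settle directly. Suppose $x = \sum_g x_g u_g$ commutes with every $b \in \mA$. Comparing Fourier coefficients of $bx$ and $xb$ yields $b x_g = x_g \alpha_g(b)$ for all $b \in \mA$ and all $g$. For $g = e$ this reads $b x_e = x_e b$, so $x_e \in \mZ(\mA) = \C$; for $g \neq e$, taking adjoints gives $x_g^*\, c = \alpha_g(c)\, x_g^*$ for all $c \in \mA$, whence freeness of $\alpha_g$ forces $x_g = 0$. Thus $x = x_e \in \C$ and $\mA' \cap D = \C$.

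For simplicity, let $I \neq 0$ be a closed two-sided ideal of $D$, and for $0 \neq x \in I$ let $n(x)$ be the number of nonzero Fourier coefficients of $x$; the normal form gives $n(x) \geq 1$. Choose $x_0 \in I \setminus \{0\}$ with $n(x_0) =: m$ minimal, and let $S = \{g : (x_0)_g \neq 0\}$. Right multiplication by a suitable $u_h$ only permutes the support (as $\sigma$ is unitary-valued), so I may assume $e \in S$, i.e.\ $a := E(x_0) \neq 0$. Since $\mA$ is simple and unital, the closed ideal generated by $a$ is all of $\mA$, so a finite sum $w = \sum_j c_j a d_j$ lies within $1$ in norm and is therefore invertible; absorbing $w^{-1}$ produces $c_j, d_j \in \mA$ with $\sum_j c_j a d_j = 1$. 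Replacing $x_0$ by $\sum_j c_j x_0 d_j \in I$---whose support is contained in $S$, hence equals $S$ by minimality, and whose $e$-coefficient is $\sum_j c_j a d_j = 1$---I may assume $x_0 = 1 + \sum_{g \in S \setminus \{e\}} a_g u_g$ with each $a_g \neq 0$.

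The crux is then a support reduction: for any $b \in \mA$, the element $b x_0 - x_0 b = \sum_{g \neq e} (b a_g - a_g \alpha_g(b)) u_g$ lies in $I$, and its $u_e$-term has cancelled precisely because the leading coefficient is the central $1$. Its support is contained in $S \setminus \{e\}$, which has $m-1 < m$ elements, so minimality of $m$ forces $b x_0 - x_0 b = 0$, i.e.\ $b a_g = a_g \alpha_g(b)$ for all $b$ and all $g \in S \setminus \{e\}$. Exactly as in the irreducibility step, freeness of each such $\alpha_g$ gives $a_g = 0$, contradicting $a_g \neq 0$ unless $S \setminus \{e\} = \emptyset$. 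Hence $m = 1$ and $x_0 = 1 \in I$, so $I = D$ and $D$ is simple. The one genuinely delicate point is this support-reduction step: it works here only because finiteness of $G$ supplies the unique finite Fourier expansion and a well-defined minimal support. For an infinite discrete $G$ the minimality argument collapses and must be replaced by Kishimoto's averaging/compression estimates for free automorphisms of simple $C^*$-algebras, which is the real obstacle in the general reduced setting and is where the cited works of B\'edos do the heavy lifting.
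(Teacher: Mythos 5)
Your proof is correct as far as it goes, but note that the paper contains no in-text proof of \Cref{bedos-results} at all: it is quoted verbatim from B\'edos \cite[Theorem 3.2]{B1} and B\'edos--Omland \cite[Theorem 5.1]{BO}, so any self-contained argument is by definition a different route. Your route is the classical minimal-support (Fourier-coefficient) argument, and it is sound in the case of finite $G$: the identification $\mZ(\mA)=\C$ plus \Cref{outer=free} legitimately upgrades outerness to freeness; the adjoint trick correctly converts the coefficient relation $b\,x_g = x_g\,\alpha_g(b)$ into the template $y c = \alpha_g(c) y$ so freeness applies; right multiplication by $u_h^*$ does only permute supports, since by \Cref{crossed-product-facts}(2) it multiplies coefficients by unitaries coming from $\sigma$; and the scaling of $E(x_0)$ to $1$ via simplicity of $\mA$, followed by the commutator $b x_0 - x_0 b$ killing the $u_e$-term, is exactly the standard descent. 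You are also right to flag the scope issue yourself: since the statement inherits the notations of \Cref{crossed-product-facts}, $G$ is an arbitrary discrete group there, and your minimal-support device genuinely needs the finite Fourier expansion of \Cref{crossed-product-facts}(5); for infinite $G$ simplicity requires Kishimoto-type averaging estimates for (properly) outer automorphisms, which is precisely what the cited works of B\'edos supply. (Your irreducibility computation, by contrast, does survive infinite $G$ with a minor repair: replace the finite expansion by the coefficient maps $x \mapsto E(x u_g^*)$ and use that an element of the reduced twisted crossed product with all vanishing coefficients is zero, a consequence of faithfulness of the canonical expectation.) Since the paper only ever invokes the theorem for the generalized Weyl group, which is finite by \Cref{weyl-group}, your elementary argument in fact suffices for every application made of \Cref{bedos-results} in this article; what the citation to B\'edos buys is the statement in its full generality for arbitrary discrete groups.
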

As a consequence of the main result of this article, we shall see in
\Cref{characterization-irreducible-regular} that every irreducible
regular inclusion of simple unital $C^*$-algebras with a finite-index
conditional expectation arises only in this fashion.

As mentioned in the introduction, the essence of this article lies in
\Cref{regular-characterization}, wherein we establish that every
finite-index regular inclusion of simple unital $C^*$-algebras can be
realized as a cocycle crossed product via an outer action of a finite
group.

  \subsubsection{Finite depth $C^*$-inclusions}
The notion of depth is well established in the theory of
subfactors. Recently, it's analogue in the theory of $C^*$-algebras
has been developed and studied in good detail in \cite{JOPT}. We shall
only need the definition.

Consider an inclusion $\mB \subset \mA$ of unital $C^*$-algebras with
a finite-index conditional expectation $E: \mA \to \mB$. Then,
consider the Watatani's $C^*$-basic construction $\mB \subset \mA
\subset \mA_1$ with respect to the conditional expectation $E$.  We
know that the dual conditional expectation $E_1: \mA_1 \to \mA$ also
has finite index (\Cref{dual-ce}). Thus, one can iterate the
$C^*$-basic construction to obtain a tower of unital $C^*$-algebras
\[
\mB \subset \mA \subset \mA_1 \subset \cdots \subset \mA_k \subset \cdots,
\]
where, for each $k \geq 0$, $\mA_{k+1} = C^*(\mA_k \cup \{e_{k+1}\})$
denotes the $C^*$-basic construction of the inclusion $\mA_{k-1}\subset
\mA_{k}$ with respect to the finite-index conditional expectation $E_k:
\mA_k \to \mA_{k-1}$, with $\mA_0:= \mA$ and $\mA_{-1} := \mB$.

The inclusion $\mB \subset \mA$ is said to have finite depth if $(\mB'\cap
\mA_{k}) = (\mB'\cap \mA_{k-1}) e_{k} (\mB'\cap \mA_{k-1})$ for some $k \geq 1$. The
least such $k$ is called the depth of $\mB \subset \mA$. We shall show in \Cref{regular-depth-2} that every regular inclusion of simple unital $C^*$-algebras  with a finite-index conditional expectation has depth at most $2$.
We refer the reader to \cite{JOPT} for more on finite depth
inclusions of $C^*$-algebras.

\section{Structure of  regular inclusions of simple $C^*$-algebras}

\subsection{Generalized Weyl group of an inclusion of $C^*$-algebras}\( \)

Note that, if $\mB \subset \mA$ is an inclusion of unital $C^*$-algebras
with common unit, then $\mU(\mB)$ and $\mU(\mC_\mA(\mB))$ are both normal
subgroups of $\mN_\mA(\mB)$ and, hence, $\mU(\mB) \mU(\mC_\mA(\mB)) (=
\mU(\mC_\mA(\mB))\mU(\mB)) $ is also a normal subgroup of
$\mN_\mA(\mB)$.

Analogous to the notions of the Weyl group and the
generalized Weyl group of an inclusion of von Neumann algebras (see
\cite[Definition 2.11]{BG1}), we make the following definitions:

\begin{definition}\label{weyl-defn}
  Let $\mB \subset \mA$ be an inclusion of unital $C^*$-algebras with
  common unit. Then,
  \begin{enumerate}
\item the  Weyl
  group of the inclusion $\mB \subset \mA$ is defined as the quotient group \( \frac{\mN_\mA(\mB)}{\mU(\mB)
   } \) and will be denoted by $W_0(\mB
  \subset \mA)$; and,
  \item the   generalized Weyl
  group of the inclusion $\mB \subset \mA$ is defined as the quotient group \( \frac{\mN_\mA(\mB)}{\mU(\mB)
    \mU(\mC_\mA(\mB))} \)  and will be denoted by $W(\mB
  \subset \mA)$.
\end{enumerate}
  \end{definition}

Clearly, there exists a canonical surjective
homomorphism from the Weyl group onto the generalized Weyl group. And,
if $\mB \subset \mA$ is an irreducible inclusion, i.e., $\mB^{\prime}\cap
\mA=\C$, then $W(\mB \subset \mA) = W_0(\mB \subset \mA)$. In this article, we
shall focus only on the generalized Weyl group.

The following elementary observation will be used ahead. 
 \begin{lemma}\label{orthogonal-system}
  Let $ \mB \subset \mA$ be an inclusion of unital $C^*$-algebras and $w$
  be a unitary in $\mN_\mA(\mB) \setminus \mU(\mB) \mU(\mC_\mA(\mB))$. Then,
  $\mathrm{Ad}_w$ is an outer automorphism of $\mB$.

  Moreover, if $\mZ(\mB) \cong \C$, then $\mathrm{Ad}_w$ is a free
  automorhpism of $\mB$ and, for any conditional expectation $E: \mA\to
  \mB$, $E(w) = 0$.  In particular, for any two elements $u, v \in
  \mN_\mA(\mB)$, $E(vu^*) = 0 = E(v^*u)$ if $[u] \neq [v]$ in  $W(\mB \subset \mA)$.
   \end{lemma}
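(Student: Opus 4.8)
The plan is to prove the three assertions in turn, each reducing to a short formal argument: outerness by a direct contradiction, freeness as an immediate consequence of \Cref{outer=free}, and the vanishing of $E$ from freeness via the bimodule property of a conditional expectation.

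First I would note that $\mathrm{Ad}_w$ is a genuine automorphism of $\mB$: since $w \in \mN_\mA(\mB)$ normalizes $\mB$, the map $x \mapsto wxw^*$ carries $\mB$ onto $\mB$. To see it is outer, I would argue by contradiction. Suppose $\mathrm{Ad}_w$ were inner, say $wxw^* = vxv^*$ for all $x \in \mB$ and some $v \in \mU(\mB)$. Setting $c := v^*w$, this rearranges to $cxc^* = x$, i.e.\ $cx = xc$, for every $x \in \mB$; hence $c$ is a unitary lying in $\mC_\mA(\mB)$, and so $w = vc \in \mU(\mB)\mU(\mC_\mA(\mB))$. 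This contradicts the hypothesis $w \notin \mU(\mB)\mU(\mC_\mA(\mB))$, so $\mathrm{Ad}_w$ must be outer.

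Next, under the assumption $\mZ(\mB) \cong \C$, freeness of $\mathrm{Ad}_w$ is immediate from \Cref{outer=free}, since an outer automorphism of a unital $C^*$-algebra with trivial center is free. To deduce $E(w) = 0$ for an arbitrary conditional expectation $E : \mA \to \mB$, I would exploit the identity $wx = (wxw^*)w = \mathrm{Ad}_w(x)\,w$, valid for all $x \in \mB$ because $\mathrm{Ad}_w(x) \in \mB$. Applying $E$ and using that $E$ is a $\mB$-bimodule map together with $x, \mathrm{Ad}_w(x) \in \mB$ gives $E(w)\,x = \mathrm{Ad}_w(x)\,E(w)$ for every $x \in \mB$. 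Since $E(w) \in \mB$, freeness of $\mathrm{Ad}_w$ (applied with $y = E(w)$) forces $E(w) = 0$.

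Finally, for the ``in particular'' clause, let $u, v \in \mN_\mA(\mB)$ with $[u] \neq [v]$ in $W(\mB \subset \mA)$. Because $\mU(\mB)\mU(\mC_\mA(\mB))$ is a normal subgroup of $\mN_\mA(\mB)$, the condition $[u] \neq [v]$ is equivalent to $vu^* \notin \mU(\mB)\mU(\mC_\mA(\mB))$, and equally to $v^*u \notin \mU(\mB)\mU(\mC_\mA(\mB))$. As both $vu^*$ and $v^*u$ lie in $\mN_\mA(\mB)$, applying the previous step to each yields $E(vu^*) = 0 = E(v^*u)$. The whole argument is essentially formal; the only place demanding a little care is the first step, where one must correctly unwind the definition of innerness to land inside $\mU(\mB)\mU(\mC_\mA(\mB))$---after that, \Cref{outer=free} and the bimodule identity do all the work.
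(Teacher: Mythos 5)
Your proposal is correct and follows essentially the same route as the paper: outerness by rearranging the innerness equation to get $v^*w \in \mU(\mC_\mA(\mB))$, freeness via \Cref{outer=free}, and $E(w)=0$ by applying the $\mB$-bimodule property of $E$ to the identity $wx = \mathrm{Ad}_w(x)\,w$ and invoking freeness with $y = E(w)$. Your explicit verification of the ``in particular'' clause using normality of $\mU(\mB)\mU(\mC_\mA(\mB))$ is a detail the paper leaves implicit, but it is the same argument.
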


 \begin{proof}
We have $w\mB w^* = \mB$. Let $\varphi = \text{Ad}_w$ and suppose, on
contrary, that $\varphi$ is not an outer automorphism of $\mB$. Then,
there exists a $v\in \mU(\mB)$ such that $wxw^*=vxv^*$ for all $x \in
\mB$. This implies that $v^*w \in \mU(\mC_\mA(\mB))$; so that $w \in
\mU(\mB)\mU(\mC_\mA(\mB))$, which is a contradiction. Thus,
$\text{Ad}_w$ must be an outer automorphism of $\mB$.

If $\mZ(\mB) \cong \C$, then, by \Cref{outer=free}, it follows that
$\varphi$ is free as well. Further, let $E: \mA \to \mB$ be a conditional
expectation. Since $\varphi(x) w = w x $ for all $ x \in \mB$, it
follows that
  \[
  \varphi(x) E(w) = E(w) x \text{  for all $x \in \mB$}.
  \]
  So, by freeness of $\varphi$, we must have  $E(w) = 0$.
 \end{proof}

 A priori, it is not clear whether the generalized Weyl group of an
 inclusion $\mB \subset \mA$ is finite or not. However, if $\mB$ has trivial
 center and there exists a finite-index conditional expectation from
 $\mA$ onto $\mB$, we can show that $W(\mA \subset \mB)$ is finite and provide
 a bound for its cardinality.

\begin{proposition}\label{weyl-group} 
Let $\mB \subset \mA $ be an inclusion of unital $C^*$-algebras with
$\mZ(\mB) \cong \C$. If there exists a finite-index conditional
expectation $E: \mA \to \mB$, then $W(\mB \subset \mA)$ is finite and
\[
|W(\mB \subset \mA)| \leq \mathrm{dim}(\mB'\cap \mA_1),
\]
where $\mA_1
$ is the $C^*$-basic construction of $\mB \subset \mA$ with respect to
the conditional expectation $E$.
\end{proposition}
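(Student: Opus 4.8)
The plan is to construct, inside the relative commutant $\mB'\cap\mA_1$, a family of mutually orthogonal nonzero projections indexed by the generalized Weyl group $W(\mB\subset\mA)$; counting them then yields both the finiteness of $W(\mB\subset\mA)$ and the asserted dimension bound. This mirrors the classical subfactor argument bounding a Weyl group by $\dim(N'\cap M_1)$, with the Jones projection $e_1$ of the $C^*$-basic construction $\mA_1$ playing its usual role. Concretely, for each $u\in\mN_\mA(\mB)$ I set $p_u:=u\,e_1\,u^*\in\mA_1$. First I would check that each $p_u$ is a nonzero projection lying in $\mB'\cap\mA_1$: it is a projection since $u$ is unitary and $e_1$ a projection, and it is nonzero because it is unitarily equivalent (via $u\in\mA\subset\mA_1$) to $e_1\neq 0$. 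To see $p_u\in\mB'$, fix $b\in\mB$ and put $b':=u^*bu\in\mB$ (using $u\in\mN_\mA(\mB)$); since $e_1\in\mB'$ one computes $u e_1 u^* b = u e_1 b' u^* = u b' e_1 u^* = b\,u e_1 u^*$, so $p_u$ commutes with $\mB$.

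The heart of the matter is the orthogonality relation: if $[u]\neq[v]$ in $W(\mB\subset\mA)$, then $p_u p_v=0$. Using the defining relation $e_1 x e_1 = E(x) e_1$ of the Jones projection (with $x=u^*v\in\mA$), I obtain $p_u p_v = u\,e_1(u^*v)e_1\,v^* = u\,E(u^*v)\,e_1\,v^*$. Now $E(u^*v)=0$: since $\mZ(\mB)\cong\C$, the element $v^*u\in\mN_\mA(\mB)\setminus\mU(\mB)\mU(\mC_\mA(\mB))$ induces a free automorphism $\mathrm{Ad}_{v^*u}$ of $\mB$, so that $E(v^*u)=0$ and hence $E(u^*v)=E(v^*u)^*=0$; this is exactly the content of \Cref{orthogonal-system}. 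Consequently, choosing one representative $u$ for each class, the assignment $[u]\mapsto p_u$ carries distinct classes to mutually orthogonal, hence distinct, nonzero projections in $\mB'\cap\mA_1$.

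It remains to extract the two conclusions. For finiteness, given any finite set of distinct classes $[u_1],\dots,[u_N]$ we have $\sum_{i}p_{u_i}\le 1$ in $\mA_1$; applying the dual conditional expectation $\widetilde{E}:\mA_1\to\mA$ of \Cref{dual-ce}, which is $\mA$-bimodular and satisfies $\widetilde{E}(e_1)=\mathrm{Ind}_W(E)^{-1}$, gives $\sum_{i}u_i\,\mathrm{Ind}_W(E)^{-1}\,u_i^*\le \widetilde{E}(1)=1$. Since $\mathrm{Ind}_W(E)^{-1}\in\mZ(\mA)$ is positive and invertible, it dominates $\|\mathrm{Ind}_W(E)\|^{-1}\,1$, so each summand dominates $\|\mathrm{Ind}_W(E)\|^{-1}\,1$ and therefore $N\le\|\mathrm{Ind}_W(E)\|$. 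Hence $W(\mB\subset\mA)$ is finite. Finally, a family of $|W(\mB\subset\mA)|$ mutually orthogonal nonzero projections spans a subspace of that dimension inside $\mB'\cap\mA_1$, which yields $|W(\mB\subset\mA)|\le\dim(\mB'\cap\mA_1)$.

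The main obstacle is the orthogonality step: it hinges on correctly invoking the Jones relation to collapse $e_1(u^*v)e_1$ to $E(u^*v)e_1$, and on the freeness afforded by $\mZ(\mB)\cong\C$ (without which $E(u^*v)$ need not vanish), both supplied by \Cref{orthogonal-system}. A secondary subtlety is that the estimate $|W(\mB\subset\mA)|\le\dim(\mB'\cap\mA_1)$ is only informative once $\mB'\cap\mA_1$ is finite dimensional; I sidestep having to establish finite dimensionality as a separate input by first deducing the finiteness of $W(\mB\subset\mA)$ from the dual conditional expectation and the invertibility of the Watatani index, after which the projection count delivers the stated bound in every case.
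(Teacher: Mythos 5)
Your proposal is correct, and its core is the same as the paper's: both construct the projections $u\,e_1\,u^*$ for coset representatives $u\in\mN_\mA(\mB)$, verify they lie in $\mB'\cap\mA_1$, and prove mutual orthogonality via the Jones relation $e_1 x e_1 = E(x)e_1$ together with the vanishing $E(u^*v)=0$ for $[u]\neq[v]$ supplied by \Cref{orthogonal-system}. Where you genuinely diverge is the finiteness step. The paper invokes Watatani's result (\cite[Proposition 2.7.3]{Wat}) that $\mB'\cap\mA_1$ is finite dimensional whenever $\widetilde{E}\circ E$ has finite index; finiteness of $W(\mB\subset\mA)$ and the bound $|W(\mB\subset\mA)|\le\dim(\mB'\cap\mA_1)$ then fall out simultaneously, and in particular the right-hand side is known to be finite, so the inequality is nontrivial. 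You instead avoid that input: applying the dual conditional expectation $\widetilde{E}$ of \Cref{dual-ce} to $\sum_i u_i e_1 u_i^*\le 1$ and using $T^{-1}\ge\|T\|^{-1}1$ for $T=\mathrm{Ind}_W(E)$ (valid since $t\mapsto t^{-1}$ is operator anti-monotone and $T\le\|T\|1$), you get the quantitative bound $|W(\mB\subset\mA)|\le\|\mathrm{Ind}_W(E)\|$ directly, and then deduce the stated dimension bound from linear independence of orthogonal nonzero projections. Your route is more self-contained and yields the extra numerical estimate by the index as a bonus (this is in the spirit of the paper's later consequence $|W(\mB\subset\mA)|\le[\mA:\mB]_0$ in \Cref{theorem-regular}); its only cost is that it does not by itself show $\mB'\cap\mA_1$ is finite dimensional, so the inequality you prove could a priori be vacuous — though the literal statement of \Cref{weyl-group} is still fully established. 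All the individual verifications in your write-up (nonvanishing of $p_u$, the commutation with $\mB$, $E(u^*v)=E(v^*u)^*=0$, unitality and positivity of $\widetilde{E}$) are sound.
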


\begin{proof}
Let $G:=W(\mB \subset \mA)$ and $\{u_g: g \in G\}$ denote a set of
coset representatives of $G$ in $\mN_\mA(\mB)$. Also, let $e_1$ denote
the Jones projection corresponding to $E$.

 We first assert that $\{u_g e_1 u_g^* : g \in G\}$ is a collection of
 mutually orthogonal projections in the algebra
 $\mB'\cap \mA_1$.

Note that, for each $g \in G$, since $e_1 \in \mB'$ and $u_g^* \mB u_g =
\mB$, we have
\[
(u_g e_1 u_g^*) x = u_g e_1 (u_g^* x u_g) u_g^* = u_g (u_g^* x u_g)
e_1 u_g^* = x (u_g e_1 u_g^*)
\]
for all $ x\in \mB$. Hence, $u_g e_1 u_g^* \in \mB'\cap \mA_1$ for all $g
\in G$. Further, by \Cref{orthogonal-system}, we observe that
\[
(u_g e_1 u_g^*) (u_h e_1 u_h^*) = u_g E(u_g^* u_h) e_1 u_h^* =
\delta_{g, h}u_g e_1 u_g^*
\]
for all $g, h \in G$. This proves our assertion (for which we did not
require $E$ to have finite index).

Next, let $\tilde{E}: \mA_1 \to \mA$ denote the dual conditional
expectation of $E$.  Then, $\tilde{E}$ and (hence) $\tilde{E} \circ E:
\mA_1 \to \mB$ are finite-index conditional expectations - see
\Cref{fi-ce-facts}.  Thus, it follows from \cite[Proposition
  2.7.3]{Wat} (also see \cite[Proposition 2.16]{BG2}) that $\mB'\cap
\mA_1$ is finite dimensional. Hence, $G$ must be finite and $|G| \leq
\mathrm{dim}(\mB'\cap \mA_1)$.
\end{proof}

\subsection{Two-sided and unitary bases for regular inclusions}

The following is an obvious adaptation of \cite[Lemma
  3.5]{BG1}.  We  skip the proof.

\begin{lemma}\label{outer=free-C}\cite{BG1}
Let $\mB \subset \mA $ be an inclusion of simple unital $C^*$-algebras
with a finite-index conditional expectation $E: \mA \to \mB$ and
let $\mC$ denote the intermediate $C^*$-subalgebra generated by $\mB$ and
its centralizer $\mC_\mA(\mB)$.  If $\theta$ is an automorphism of $\mC$
whose restriction to $\mB$ is an outer automorphism of $\mB$, then
$\theta$ is a free automorphism of $\mC$.
\end{lemma}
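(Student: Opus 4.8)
The plan is to exploit the finite-dimensionality of $\mC_\mA(\mB)$ to reduce the freeness of $\theta$ on $\mC$ to the freeness of $\theta_{\restriction_\mB}$ on $\mB$. First I would record the structural preliminaries. Since $\mB$ is simple and unital, $\mZ(\mB)\cong\C$, so by \Cref{outer=free} the hypothesis that $\theta_{\restriction_\mB}$ is outer upgrades to: $\theta_{\restriction_\mB}$ is a \emph{free} automorphism of $\mB$. Next, as $\mC_\mA(\mB)$ is finite dimensional, let $p_1,\dots,p_k$ be its minimal central projections. Each $p_i$ commutes with $\mC_\mA(\mB)$ (being central there) and with $\mB$ (being in the centralizer), hence commutes with $\mC=C^*(\mB\cup\mC_\mA(\mB))$; thus each $p_i$ is central in $\mC$ and $\sum_i p_i=1$. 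A short computation gives $\mZ(\mC)=\mZ(\mC_\mA(\mB))$ (if $z\in\mZ(\mC)$ then $z\in\mB'\cap\mC\subseteq\mB'\cap\mA=\mC_\mA(\mB)$ and $z$ commutes with $\mC_\mA(\mB)$, and conversely $\mZ(\mC_\mA(\mB))$ commutes with both generating sets), so the $p_i$ are exactly the minimal central projections of $\mC$. Consequently $\theta$ permutes them, say $\theta(p_i)=p_{\pi(i)}$ for a permutation $\pi$. Finally I would note that each corner $\mC p_i=C^*(\mB p_i\cup\mC_\mA(\mB)p_i)$ is isomorphic to $\mB\otimes M_{n_i}(\C)=M_{n_i}(\mB)$, with $\mB$ sitting diagonally via $b\mapsto bp_i$: the canonical $*$-homomorphism $\mB\otimes M_{n_i}(\C)\to\mC p_i$ is surjective and nonzero, hence injective because $M_{n_i}(\mB)$ is simple.

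With this in hand, suppose $y\in\mC$ satisfies $yx=\theta(x)y$ for all $x\in\mC$; since $y=\sum_i yp_i$, it suffices to show $yp_i=0$ for each $i$. Taking $x=p_i$ gives $yp_i=\theta(p_i)y=p_{\pi(i)}y$, whence $yp_i=p_{\pi(i)}yp_i$. If $\pi(i)\neq i$, then $p_{\pi(i)}$ and $p_i$ are orthogonal central projections, so $yp_i=p_{\pi(i)}p_i\,y=0$ immediately. The only substantive case is $\pi(i)=i$, where $\theta$ restricts to an automorphism of $\mC p_i\cong\mB\otimes M_{n_i}(\C)$ that preserves the diagonal copy $\mB p_i$ and acts there as $\theta_{\restriction_\mB}$. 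Restricting the relation $yb=\theta(b)y$ to this block, $w_i:=yp_i$ satisfies $w_i(bp_i)=(\theta(b)p_i)\,w_i$ for all $b\in\mB$; expanding $w_i=\sum_{\alpha,\beta}w_{\alpha\beta}\otimes e_{\alpha\beta}$ with $w_{\alpha\beta}\in\mB$ in fixed matrix units $e_{\alpha\beta}$ of $M_{n_i}(\C)=\mC_\mA(\mB)p_i$ and comparing coefficients yields $w_{\alpha\beta}\,b=\theta(b)\,w_{\alpha\beta}$ for every $b\in\mB$ and all $\alpha,\beta$. The freeness of $\theta_{\restriction_\mB}$ then forces $w_{\alpha\beta}=0$, so $yp_i=0$. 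Summing over $i$ gives $y=0$, i.e.\ $\theta$ is free on $\mC$.

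The individual steps are routine; the single point requiring care is the block reduction, namely verifying that the minimal central projections of $\mC$ coincide with those of $\mC_\mA(\mB)$ (so that $\theta$ genuinely permutes the $p_i$) and that each corner is the simple algebra $M_{n_i}(\mB)$ with $\mB$ embedded diagonally. I expect the main (minor) obstacle to be the possibility that $\theta$ does not fix the blocks individually; this is exactly why the permutation $\pi$ is introduced, and it is disposed of cheaply, since orthogonality of distinct central projections annihilates every off-diagonal contribution, leaving only the $\theta$-fixed blocks, where the freeness of $\theta_{\restriction_\mB}$ applies coefficientwise.
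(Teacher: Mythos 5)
Your proposal is correct and is essentially the paper's own route: the paper skips the proof, invoking only that the lemma is an ``obvious adaptation'' of \cite[Lemma 3.5]{BG1}, and your argument --- cutting $\mC$ by the minimal central projections $p_i$ of the finite-dimensional centralizer after checking $\mZ(\mC)=\mZ(\mC_\mA(\mB))$, disposing of the blocks moved by $\theta$ via orthogonality of central projections, identifying each $\theta$-fixed corner $\mC p_i$ with the simple algebra $M_{n_i}(\mB)$ with $\mB$ sitting diagonally, and then applying the freeness of $\theta_{\restriction_\mB}$ (guaranteed by \Cref{outer=free}, since $\mZ(\mB)\cong\C$) to each matrix coefficient --- is precisely that adaptation. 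All the delicate points (injectivity of $b\mapsto bp_i$ by simplicity of $\mB$, injectivity of the surjection $M_{n_i}(\mB)\to\mC p_i$ by simplicity of $M_{n_i}(\mB)$, and the implicit finite-dimensionality of $\mC_\mA(\mB)$ coming from the finite-index expectation) are handled or standard, so there is nothing to add.
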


\begin{corollary}\label{orthogonal-system-C}
  Let the notations be as in \Cref{outer=free-C}. Then, $\mN_\mA(\mB)
  \subseteq \mN_\mA(\mC_\mA(\mB)) \cap \mN_\mA(\mC)$ and, for each $w
  \in \mN_\mA(\mB) \setminus \mU(\mB) \mU(\mC_\mA(\mB))$, $w \mC w^*
  = \mC$, $\mathrm{Ad}_w$ is a free automorphism of $\mC$ and
  $E(w) = 0$.

In particular, for any two elements $u, v \in \mN_\mA(\mB)$, $E(vu^*)
= 0 = E(v^*u)$ if $[u] \neq [v]$ in the generalized Weyl group $W(\mB
\subset \mA)$.
  \end{corollary}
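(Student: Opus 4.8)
The plan is to derive every assertion from the two preceding lemmas together with one elementary normalizer computation, so the proof is short once the right facts are quoted in the right order.

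First I would establish the inclusion $\mN_\mA(\mB) \subseteq \mN_\mA(\mC_\mA(\mB))$ by direct verification. Fix $w \in \mN_\mA(\mB)$, so that $w\mB w^* = \mB$ and hence also $w^*\mB w = \mB$. For $x \in \mC_\mA(\mB)$ and an arbitrary $b \in \mB$, writing $b' := w^* b w \in \mB$ and using $xb' = b'x$, one checks that $(wxw^*)b = b(wxw^*)$; this gives $w\mC_\mA(\mB)w^* \subseteq \mC_\mA(\mB)$, and running the same argument with $w^*$ yields the reverse inclusion, so $w \in \mN_\mA(\mC_\mA(\mB))$. Since $\mathrm{Ad}_w$ is a $*$-automorphism of $\mA$ carrying each of the two generating sets $\mB$ and $\mC_\mA(\mB)$ of $\mC = C^*(\mB \cup \mC_\mA(\mB))$ onto itself, it maps $\mC$ onto $\mC$; hence $w \in \mN_\mA(\mC)$ and $w\mC w^* = \mC$. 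Together these prove $\mN_\mA(\mB) \subseteq \mN_\mA(\mC_\mA(\mB)) \cap \mN_\mA(\mC)$.

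Next I would treat a fixed $w \in \mN_\mA(\mB) \setminus \mU(\mB)\mU(\mC_\mA(\mB))$. Since $\mB$ is simple and unital, $\mZ(\mB) \cong \C$, so \Cref{orthogonal-system} applies and tells us that $\mathrm{Ad}_w$ restricts to an outer automorphism of $\mB$ and that $E(w) = 0$ for every conditional expectation $E \colon \mA \to \mB$, in particular for the one in \Cref{outer=free-C}. By the previous paragraph $\mathrm{Ad}_w$ restricts to a genuine automorphism of $\mC$ whose restriction to $\mB$ is outer, so \Cref{outer=free-C} upgrades this to freeness of $\mathrm{Ad}_w$ on all of $\mC$. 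This establishes the three assertions $w\mC w^* = \mC$, $\mathrm{Ad}_w$ free on $\mC$, and $E(w) = 0$. For the ``in particular'' clause, take $u, v \in \mN_\mA(\mB)$ with $[u] \neq [v]$ in $W(\mB \subset \mA)$; because $\mU(\mB)\mU(\mC_\mA(\mB))$ is normal in $\mN_\mA(\mB)$, the elements $vu^* = vu^{-1}$ and $v^*u = v^{-1}u$ both lie in $\mN_\mA(\mB) \setminus \mU(\mB)\mU(\mC_\mA(\mB))$ exactly when $[u] \neq [v]$, and applying $E(w)=0$ to $w = vu^*$ and $w = v^*u$ gives $E(vu^*) = 0 = E(v^*u)$.

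I do not expect a serious obstacle: the content is largely bookkeeping with \Cref{orthogonal-system} and \Cref{outer=free-C}, and the only genuine computation — that $\mN_\mA(\mB)$ normalizes $\mC_\mA(\mB)$ — becomes routine once $w^*\mB w = \mB$ is used to rewrite the relevant commutators. The single point deserving care is checking that $\mathrm{Ad}_w$ really restricts to an automorphism \emph{of} $\mC$ (rather than merely a map into $\mA$) before invoking \Cref{outer=free-C}, which is precisely what the generating-set argument in the first paragraph secures.
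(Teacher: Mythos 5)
Your proposal is correct and follows essentially the same route as the paper: the paper outsources the normalizer inclusion $\mN_\mA(\mB) \subseteq \mN_\mA(\mC_\mA(\mB)) \cap \mN_\mA(\mC)$ to the proof of Lemma 3.2 of \cite{BG1} (which is exactly the commutator computation you write out) and then, as you do, invokes \Cref{outer=free-C} together with the argument of \Cref{orthogonal-system} for outerness, freeness on $\mC$, and $E(w)=0$. Your only (harmless) variation is quoting $E(w)=0$ directly from \Cref{orthogonal-system} rather than re-running the freeness argument on $\mC$, and your coset bookkeeping for the ``in particular'' clause is exactly as intended.
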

\begin{proof}
  That $\mN_\mA(\mB) \subseteq \mN_\mA(\mC_\mA(\mB)) \cap \mN_\mA(\mC)$ follows on
  the lines of the proof of \cite[Lemma 3.2]{BG1}. Further, it readily
  follows that $w \mC w^* = \mC$. Then, because of the preceding lemma,
  the rest follow on the lines on \Cref{orthogonal-system}.
  \end{proof}

\begin{lemma}\label{IMS-F}
  Let the notations be as in \Cref{outer=free-C} and $E_0: \mA \to
  \mB$ denote the minimal conditional expectation as in \Cref{minimal}. Then, $\mC\in
  \mathrm{IMS}(\mB, \mA, E_0)$.
 \end{lemma}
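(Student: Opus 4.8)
The plan is to construct, from the two-sided orthonormal quasi-basis supplied by \Cref{E0-restriction-to-C-two-sided}(2), an explicit conditional expectation $F\colon\mA\to\mC$ compatible with $E_0$, and to extract its positivity from Watatani's basic construction. First I would fix such a quasi-basis $\{\mu_j\}\subseteq\mC_\mA(\mB)$ for ${E_0}_{\restriction_\mC}$; by \Cref{tr-2-sided-basis} I may take the $\mu_j$ to be scalar multiples $\sqrt{n_i/\tau_0(p_i)}\,e^{(i)}_{\kappa\beta}$ of a system of matrix units of the finite-dimensional algebra $\mC_\mA(\mB)$. Then I set $F(x)=\sum_j E_0(x\mu_j)\mu_j^*$. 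Since $E_0(x\mu_j)\in\mB$ and $\mu_j^*\in\mC_\mA(\mB)$, $F$ maps $\mA$ into $\mC$, and the quasi-basis identities give immediately $F(1)=1$, $F_{\restriction_\mC}=\id$, and right $\mC$-linearity; moreover, using that ${E_0}_{\restriction_{\mC_\mA(\mB)}}=\tau_0$ is scalar-valued together with $\sum_j E_0(\mu_j^*)\mu_j=1$, one checks directly that ${E_0}_{\restriction_\mC}\circ F=E_0$. Thus the only missing ingredient is that $F$ is positive, i.e.\ a genuine conditional expectation.

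The hard part is exactly this positivity, and it is where the minimality of $E_0$ is used decisively. Concretely, I would prove the symmetry
\[
\sum_j E_0(x\mu_j)\mu_j^* \;=\; \sum_j \mu_j E_0(\mu_j^* x)\qquad(x\in\mA),
\]
that is, that the ``right'' and ``left'' quasi-basis expressions for $F$ coincide. Expanding both sides in the matrix units of the first step, the identity reduces block by block to the statement that $\tau_0(e^{(i)}_{\kappa\kappa})$ is independent of $\kappa$; this equal-weight condition is precisely the trace property of $\tau_0$, which holds because $E_0$ is the minimal expectation (\Cref{E0-restriction-to-C-two-sided}(1)). For a non-minimal expectation $\tau_0$ need not be tracial and the symmetry genuinely fails, so this step cannot be bypassed. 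Granting it, $F$ is $\ast$-preserving and $\mC$-bimodular.

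With the symmetry in hand I would deduce complete positivity through the basic construction. Let $e_1$ be the Jones projection of $E_0$ and set $e_\mC:=\sum_j\mu_j e_1\mu_j^*\in\mA_1$. Orthonormality of $\{\mu_j\}$ gives $e_\mC=e_\mC^*=e_\mC^2$ and $e_1\le e_\mC$; a direct computation shows $e_\mC$ commutes with $\mB$, and the matrix-unit form of $\{\mu_j\}$ shows it commutes with $\mC_\mA(\mB)$, so $e_\mC\in\mC'\cap\mA_1$. Using the symmetry one verifies $e_\mC\,\lambda(x)\,e_\mC=\lambda(F(x))\,e_\mC$ for all $x\in\mA$. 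Since $x\mapsto e_\mC\lambda(x)e_\mC$ is completely positive and, by faithfulness of $E_0$ together with $e_1\le e_\mC$, the map $\mC\ni c\mapsto\lambda(c)e_\mC$ is an injective unital $\ast$-homomorphism, it follows that $F$ is unital and completely positive; being a unital completely positive idempotent onto $\mC$, $F$ is a conditional expectation by Tomiyama's theorem.

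Finally, $F$ is a conditional expectation from $\mA$ onto $\mC$ satisfying ${E_0}_{\restriction_\mC}\circ F=E_0$, and it automatically has finite index by \Cref{unique-compatible-ce}(2); hence $\mC\in\mathrm{IMS}(\mB,\mA,E_0)$. I expect the substantive obstacle to be the symmetry/positivity step: all the remaining verifications are formal manipulations with the quasi-basis, whereas this one encodes the minimality of $E_0$ and carries the actual content of the lemma.
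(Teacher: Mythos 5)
Your overall architecture (define $F(x)=\sum_j E_0(x\mu_j)\mu_j^*$, get positivity via $e_\mC=\sum_j\mu_j e_1\mu_j^*$ and Tomiyama) could in principle work, but the justification you give at the step you yourself identify as the crux is insufficient, and this is a genuine gap. Since the $E_0$-values lie in $\mB$ and hence commute with the matrix units $e^{(i)}_{\kappa\beta}\in\mC_\mA(\mB)$, and since the matrix units are $\mB$-linearly independent in $\mC$ (the natural surjection $\mB\otimes\mC_\mA(\mB)\to\mC$ is injective, as each summand $\mB\otimes M_{n_i}$ is simple and no $p_i$ is killed), your symmetry identity is equivalent, block by block and entry by entry, to
\[
E_0(xz)\;=\;E_0(zx)\qquad\text{for \emph{all} }x\in\mA,\ z\in\mC_\mA(\mB),
\]
and \emph{not} merely to $\tau_0(e^{(i)}_{\kappa\kappa})$ being independent of $\kappa$. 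Traciality of $\tau_0$ yields this commutation only for $x\in\mC$, where it is just the two-sided quasi-basis property; for $x\in\mA\setminus\mC$ the coefficients $E_0(xe^{(i)}_{\kappa\beta})\in\mB$ are not controlled by $\tau_0$ at all. Worse, the displayed commutation is essentially equivalent to the lemma itself: it says precisely that right multiplication by $z\in\mC_\mA(\mB)$ is adjointable with adjoint $R_{z^*}$ on the Hilbert $\mB$-module $(\mA,\langle x,y\rangle=E_0(x^*y))$, which is exactly what the compatible expectation $F$ encodes, and conversely it follows easily \emph{once} the lemma is known (via $E_0={E_0}_{\restriction_\mC}\circ F$, the $\mC$-bimodularity of $F$, and traciality of $\tau_0$). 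The von Neumann analogue makes clear why traciality alone cannot suffice: there one writes an arbitrary expectation as $E'=E_\tau(h\,\cdot\,h)$ with $h\in N'\cap M$ and bootstraps from the \emph{global} symmetry of the trace-preserving $E_\tau$, using traciality on the relative commutant only to force $h$ central; in the $C^*$-setting there is no global reference trace or expectation to bootstrap from. So as written your argument is circular at its decisive step. (A secondary inaccuracy: right $\mC$-linearity of $F$ is not ``immediate'' from the quasi-basis identities — only $\mB$-linearity is — though that part can be repaired by matrix-unit bookkeeping; your remaining steps, $e_\mC^2=e_\mC=e_\mC^*$, $e_\mC\lambda(x)e_\mC=\lambda(F(x))e_\mC$, complete positivity, and the compatibility computation, are fine \emph{modulo} the symmetry.)

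By contrast, the paper sidesteps the explicit construction entirely. It invokes \cite[Proposition 6.1]{Iz} — using simplicity of \emph{both} $\mA$ and $\mB$, a hypothesis your proposal never uses for $\mA$ — to obtain \emph{some} finite-index conditional expectation $F:\mA\to\mC$, and then applies \cite[Lemma 2.12.2]{Wat}, by which it suffices that the two finite-index expectations $E_0$ and ${(E_0)}_{\restriction_\mC}\circ F$ from $\mA$ onto $\mB$ agree on $\mC_\mA(\mB)$; since $\mC_\mA(\mB)\subseteq\mC$ and $F$ fixes $\mC$ pointwise, this agreement is trivial. If you want to keep your explicit formula for $F$, the honest route is to first prove the lemma the paper's way and then \emph{deduce} the symmetry (and with it your closed-form expression for $F$) as a corollary, rather than using it as an input.
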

\begin{proof}
    Since $\mA$ and $\mB$ are simple and $E_0: \mA \to \mB$ has finite index,
    it follows from \cite[Proposition 6.1]{Iz} that there exists a
    (finite-index) conditional expectation $F: \mA \to \mC$. We assert that
    $F$ is compatible with respect to $E_0$.

  Note that ${(E_0)}_{\restriction_\mC}:\mC\to \mB$ has finite index, by
  \Cref{E0-restriction-to-C-two-sided}. Thus, in view of \cite[Lemma
    2.12.2]{Wat}, it is enough to show that the restrictions of $E_0$
  and $(E_0)_{\restriction_\mC} \circ F$ to $\mC_\mA(\mB)$ are same.
  Clearly, $\mC_\mA(\mB) = \mB'\cap \mC$ because $\mC$ contains
  $\mC_\mA(\mB)$; and, for any $z \in \mB'\cap \mC$, we have
  \[
  ({(E_0)}_{\restriction_\mC} \circ F)(z) = {(E_0)}_{\restriction_\mC}(z) =
  E_0(z).
  \]
  Hence, $F$ is a compatible conditional expectation and $\mC\in
  \mathrm{IMS}(\mB, \mA , E_0)$.
\end{proof}

We shall now focus only on regular inclusions of simple unital
$C^*$-algebras. Thus, from here on, $\mB \subset \mA$ denotes a fixed
regular inclusion of simple unital $C^*$-algebras with a finite-index
conditional expectation from $\mA$ onto $\mB$.

Let $E_0: \mA \to \mB$ denote the (unique) minimal conditional expectation
as in \Cref{minimal} and $G$ denote the generalized Weyl group of the
inclusion $\mB \subset \mA$ with a fixed set of left coset
representatives $\{u_g : g \in G\}$ in $\mN_\mA(\mB)$, with $u_e =
1$. Further, let  $\mC:=C^*(\mB \cup (\mC_\mA(\mB)))$; $F: \mA \to \mC$
be the compatible conditional expectation as in \Cref{IMS-F} and, let
$\mA_1$ denote the Watatani's $C^*$-basic construction of $\mB \subset \mA$
with respect to the minimal conditional expectation $E_0$ (and Jones
projection $e_1 \in \mA_1$).
\begin{proposition} \label{cosetrepresentative}
$\{u_g: g \in G\}$ is a two-sided orthonormal quasi-basis for $F$.

  In particular,  $\mathrm{Ind}_W(F) = |G|$.
 \end{proposition}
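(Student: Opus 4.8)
The plan is to reduce everything to a single computational fact about how $F$ acts on normalizers. First I would record the key claim: for $v \in \mN_\mA(\mB)$ one has $F(v) = v$ when $[v] = e$ in $G$, and $F(v) = 0$ otherwise. The first case is immediate, since $[v] = e$ means $v \in \mU(\mB)\mU(\mC_\mA(\mB)) \subseteq \mC$, whence $F(v) = v$. For the second case I would invoke \Cref{orthogonal-system-C}: when $[v] \neq e$ the element $v$ normalizes $\mC$ and $\mathrm{Ad}_v$ is a \emph{free} automorphism of $\mC$. Then for every $x \in \mC$ the identity $vx = \mathrm{Ad}_v(x)\,v$, combined with the $\mC$-bimodularity of $F$ (right-linearity on $F(vx) = F(v)x$ and left-linearity on $F(\mathrm{Ad}_v(x)v) = \mathrm{Ad}_v(x)F(v)$, using $\mathrm{Ad}_v(x)\in\mC$), yields $F(v)\,x = \mathrm{Ad}_v(x)\,F(v)$ for all $x \in \mC$. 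Applying freeness of $\mathrm{Ad}_v$ with $y = F(v)$ forces $F(v) = 0$.

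Orthonormality then follows at once. Since $g \mapsto [u_g]$ realizes the quotient homomorphism $\mN_\mA(\mB) \to G$ and the $u_g$ are coset representatives, for $g \neq h$ the elements $u_g^* u_h$ and $u_g u_h^*$ are normalizers with nontrivial classes $g^{-1}h$ and $gh^{-1}$, so the key claim gives $F(u_g^* u_h) = 0 = F(u_g u_h^*)$; and $F(u_g^* u_g) = F(1) = 1 = F(u_g u_g^*)$. Hence $F(u_g^* u_h) = \delta_{g,h} = F(u_g u_h^*)$, which is orthonormality in both the right and left senses.

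For the right quasi-basis identity $x = \sum_g F(x u_g)u_g^*$, I would use regularity to reduce to normalizers. As $\mN_\mA(\mB)$ is a self-adjoint multiplicative group generating $\mA$, we have $\mA = \overline{\mathrm{span}}\,\mN_\mA(\mB)$, and both sides of the identity are norm-continuous in $x$ (the sum is finite by \Cref{weyl-group}, and $F$ is contractive), so it suffices to verify it for $w \in \mN_\mA(\mB)$. Writing $[w] = h$, for each $g$ the product $w u_g$ is a normalizer of class $hg$, so the key claim gives $F(w u_g) = 0$ unless $g = h^{-1}$, in which case $w u_{h^{-1}} \in \mC$ and $F(w u_{h^{-1}}) = w u_{h^{-1}}$. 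Therefore $\sum_g F(w u_g)u_g^* = w u_{h^{-1}} u_{h^{-1}}^* = w$, as required. The left quasi-basis identity is identical, using $F(w u_g^*) = \delta_{g,h}\, w u_h^*$. Finally, since $\{u_g\}$ is a unitary right quasi-basis, $\mathrm{Ind}_W(F) = \sum_g u_g u_g^* = |G|$.

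The main obstacle is the second case of the key claim, namely that $F$ annihilates every normalizer lying outside the identity coset. This is precisely the point where freeness of $\mathrm{Ad}_v$ on the whole of $\mC$ (not merely on $\mB$) is indispensable, which in turn rests on \Cref{outer=free-C} and the simplicity hypotheses; one must also be careful to apply the two bimodularity steps on the correct sides. Everything else amounts to bookkeeping with the Weyl-group quotient homomorphism.
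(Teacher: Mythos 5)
Your proof is correct, and its organization differs from the paper's in a genuine way. The paper proves the stronger structural statement $\mA = \sum_{g \in G} u_g \mC$: it shows $\mL := \sum_g u_g\mC$ is a unital $*$-subalgebra (using $\mN_\mA(\mB)\subseteq\mN_\mA(\mC)$ from \Cref{orthogonal-system-C}), dense by regularity, and then closed, via the projection trick $c_h^{(n)} = F\big(u_h^*\sum_g u_g c_g^{(n)}\big)$; the two quasi-basis identities are then read off from the decomposition $x=\sum_g u_g c_g$. You instead verify the reconstruction formula $x=\sum_g F(xu_g)u_g^*$ directly on single normalizers and extend by linearity and norm-continuity, using that $\mathrm{span}\,\mN_\mA(\mB)$ is dense in $\mA$ (regularity, plus the fact that $\mN_\mA(\mB)$ is a self-adjoint multiplicative group, so the $*$-algebra it generates is its linear span) and that $G$ is finite by \Cref{weyl-group}. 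This bypasses the closedness argument entirely and is somewhat leaner; what it gives up is the by-product $\mA=\sum_g u_g\mC$, a finite Fourier-type decomposition that foreshadows and is implicitly reused in the crossed-product identification of \Cref{regular-characterization}. Both proofs pivot on the same key computation, which you actually spell out more carefully than the paper does: the statement of \Cref{orthogonal-system-C} literally records $E(w)=0$ only for the expectation onto $\mB$, and your explicit derivation of $F(w)=0$ for $w$ in a nontrivial coset --- from $F(w)x=\mathrm{Ad}_w(x)F(w)$ for all $x\in\mC$ via $\mC$-bimodularity, then freeness of $\mathrm{Ad}_w$ on $\mC$ --- is precisely the step the paper tacitly invokes when it cites that corollary to get $F(u_s^*u_t)=\delta_{s,t}$. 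Your remaining verifications (two-sided orthonormality, the left identity via $F(wu_g^*)=\delta_{g,h}\,wu_h^*$ for $[w]=h$, and $\mathrm{Ind}_W(F)=\sum_g u_gu_g^*=|G|$) conform to the paper's quasi-basis conventions and are sound.
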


\begin{proof}
  We first assert that $\sum_{ g\in G} u_g\mC= \mA$.

  Let $\mL:= \sum_{ g\in G} u_g\mC$.  Note that, for any two $g, h$ in
  $G$, $(u_g\mC)( u_h \mC) = u_k\mC$ for some $k \in G$. Also, for any
  $g \in G$, we have $(u_g\mC)^*= \mC u_g^* = u_g^* u_g \mC u_g^* =
  u_g^* \mC$ because $u_g \in \mN_\mA(\mB) \subseteq \mN_\mA(\mC)$ (by
  \Cref{orthogonal-system-C}); so that, $(u_g\mC)^* = u_k \mC$ for
  some $k \in G$. Hence, $\mL$ is a unital $*$-subalgebra of
  $\mA$. Further, since $\mB \subset \mA$ is regular, it follows that $\mL$
  is dense in $\mA$ (because $\mN_\mA(\mB) = \cup_{g \in G} u_g \mC$).
  \color{black} So, it just remains to show that $\mL$ is closed.

  Let $a \in \overline{\mL}$. Then, $\sum_{g \in G} u_g c_g^{(n)} \to a$
  for some sequence $\{ \sum_g u_g c_g^{(n)}\} \subset L$. Thus,
  \[
  F\Big(u_h^* \sum_{g \in G} u_g c_g^{(n)}\Big) \to F(u_h^* a)
  \] 
  for all $h
  \in G$. Note that, by \Cref{orthogonal-system-C}, we have
  $F(u_s^*u_t) = \delta_{s, t}$ for all $s, t \in G$; so,
\[
F\Big(u_h^* \sum_{g \in G} u_g c_g^{(n)}\Big) = \sum_g F(u_h^* u_g )
c_g^{(n)} = c_h^{(n)}
\]
for all $h \in G$, $n \in \N$. Thus, $\sum_g
u_g c_g^{(n)} \to \sum_g u_g F(u_g^* a) \in \mL$; so, $a \in \mL$ and $\mL$
is closed. This proves our assertion.

Now,  every $x\in \mA$ can be written as $x = \sum_g u_g
c_g$, $c_g \in \mC$. Thus, $F(u_h^*x) = \sum_g F(u_h^* u_g) c_g = c_h$
for all $h \in G$; so that $x = \sum_g u_gF(u_g^*x)$ for all $x \in
\mA$. Also, by \Cref{orthogonal-system-C} again, we have $F(u_g^* u_h) =
\delta_{g,h}$ for all $g, h \in G$. Hence, $\{u_g : g \in G\}$ is an
orthonormal right quasi-basis for $F$.

Again, since $\{u_g \}\subseteq\mN_\mA(\mB) \subseteq \mN_\mA(\mC)$, we have
$u_g \mC= \mC u_g$ for all $g \in G$. So, $\sum_g \mC u_g = \mA$. And, as
above, it is easily seen  that $x = \sum_g F(xu_g^*)u_g$ for all $x \in
\mA$. Hence, $\{u_g\}$ is an orthonormal left quasi-basis for $F$ as
well.

Thus, $\{u_g: g \in G\}$ is a two-sided orthonormal quasi-basis for
$F$ consisting of unitaries in $\mN_\mA(\mB)$. Finally, we have \(
\mathrm{Ind}_W(F) = \sum_{g\in G} u_g u_g^* = |G|.  \)
\end{proof} 

\begin{theorem}\label{theorem-regular}\label{weylgroup}
  With running notations, the following hold:
  \begin{enumerate}
    \item $E_0: \mA \to \mB$ admits a two-sided orthonormal quasi-basis;
  \item $\tau_0:= {(E_0)}_{\restriction_{\mC_{\mA}(\mB)}} :\mC_{\mA}(\mB) \to \C$ is
    the (unique) Markov trace for the (connected) inclusion $\C
    \subseteq \mC_{\mA}(\mB)$ with modulus $\mathrm{dim}(\mC_{\mA}(\mB))$; and,
\item
  \(
    [\mA:\mB]_0 =  |W(\mB \subset \mA)|\, \mathrm{dim}(\mC_\mA(\mB)).
    \)
  \end{enumerate}
  In particular, $[\mA:\mB]_0$ is an integer, $|W(\mB \subset \mA)| \leq
  [\mA:\mB]_0$ and if, in addition, $\mB \subset \mA$ is irreducible, then
    \[
      [\mA:\mB]_0 =  |W_0(\mB \subset \mA)|.
      \]
\end{theorem}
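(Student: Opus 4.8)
The plan is to manufacture a single two-sided orthonormal quasi-basis for $E_0$ out of the two quasi-bases already produced, and then simply read off the index. The ingredients are: $\{u_g : g\in G\}$ is a two-sided orthonormal quasi-basis for $F:\mA\to\mC$ with $\mathrm{Ind}_W(F)=|G|$ (\Cref{cosetrepresentative}); $\{\mu_j\}\subset\mC_\mA(\mB)$ is a two-sided orthonormal quasi-basis for ${(E_0)}_{\restriction_\mC}:\mC\to\mB$ coming from the Markov trace $\tau_0$ (\Cref{E0-restriction-to-C-two-sided} together with \Cref{tr-2-sided-basis}); and $E_0={(E_0)}_{\restriction_\mC}\circ F$ (\Cref{IMS-F}). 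Feeding this tower into \Cref{fi-ce-facts}(2) already gives that $\{\mu_j u_g\}$ is a \emph{left} quasi-basis for $E_0$ (while $\{u_g\mu_j\}$ is a right one). The whole difficulty of part (1) is therefore that the tower formula hands over the $\mC$-part on opposite sides for the two one-sided conditions; the work lies in promoting the single family $\{\mu_j u_g\}$ to a two-sided orthonormal quasi-basis, and this is exactly where the normalization of $\mC_\mA(\mB)$ by $\mN_\mA(\mB)$ and the invariance of the minimal trace must be used.

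The mechanism I would exploit is that each $u_g$ normalizes both $\mC$ and $\mC_\mA(\mB)$ (\Cref{orthogonal-system-C}) and preserves $\tau_0$. First I would record the equivariance of $E_0$: the assignment $x\mapsto u^*E_0(uxu^*)u$ is a conditional expectation of $\mA$ onto $\mB$ obtained from $E_0$ by conjugating the inclusion by the automorphism $\mathrm{Ad}_u$ fixing $\mB$, hence has the same (scalar) Watatani index $[\mA:\mB]_0$ and so coincides with $E_0$ by the uniqueness in \Cref{minimal}; thus $E_0(uxu^*)=uE_0(x)u^*$ for every $u\in\mN_\mA(\mB)$. Restricting to $\mC_\mA(\mB)$, where $E_0=\tau_0$ is scalar-valued, yields $\tau_0\circ\mathrm{Ad}_{u_g}=\tau_0$, so for each $g$ the conjugated family $\nu_{j,g}:=u_g^*\mu_j u_g\in\mC_\mA(\mB)$ is again a two-sided orthonormal quasi-basis for $\tau_0$, hence for ${(E_0)}_{\restriction_\mC}$. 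Writing $\mu_j u_g=u_g\nu_{j,g}$ and using right $\mC$-linearity of $F$, I would then compute
\[
\sum_{g,j}E_0(x\,\mu_j u_g)(\mu_j u_g)^*=\sum_{g}\Big(\sum_{j}{(E_0)}_{\restriction_\mC}\big(F(xu_g)\nu_{j,g}\big)\nu_{j,g}^*\Big)u_g^*=\sum_{g}F(xu_g)u_g^*=x,
\]
so $\{\mu_j u_g\}$ is a right quasi-basis as well. For orthonormality, with $c:=\mu_j^*\mu_{j'}\in\mC_\mA(\mB)$ I would factor $u_g^*cu_{g'}=(u_g^*cu_g)(u_g^*u_{g'})$, note $u_g^*cu_g\in\mC$, and use left $\mC$-linearity of $F$ with $F(u_g^*u_{g'})=\delta_{g,g'}$ to obtain $E_0\big((\mu_j u_g)^*(\mu_{j'}u_{g'})\big)=\delta_{g,g'}\,\tau_0(u_g^*cu_g)=\delta_{g,g'}\,\tau_0(c)=\delta_{g,g'}\delta_{j,j'}$, the trace-invariance and orthonormality of $\{\mu_j\}$ being used at the last step; the left-handed orthonormality is entirely symmetric. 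This settles (1).

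For (2) and (3) I would feed this quasi-basis into the index. Since each $u_g$ is unitary,
\[
\mathrm{Ind}_W(E_0)=\sum_{g,j}(\mu_j u_g)(\mu_j u_g)^*=|G|\sum_{j}\mu_j\mu_j^*=|G|\,\mathrm{Ind}_W(\tau_0).
\]
As $\mA$ is simple, $\mZ(\mA)=\C$, so the left-hand side $[\mA:\mB]_0$ is a scalar, which forces $\mathrm{Ind}_W(\tau_0)=[\mA:\mB]_0/|G|$ to be a scalar as well. By \Cref{tr-2-sided-basis} this is precisely the statement that $\tau_0$ is the (unique) Markov trace for the connected inclusion $\C\subseteq\mC_\mA(\mB)$ with modulus $\mathrm{dim}(\mC_\mA(\mB))$, giving (2), and moreover $\mathrm{Ind}_W(\tau_0)=\mathrm{dim}(\mC_\mA(\mB))$, whence $[\mA:\mB]_0=|W(\mB\subset\mA)|\,\mathrm{dim}(\mC_\mA(\mB))$, which is (3).

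The stated consequences are then bookkeeping. The group $W(\mB\subset\mA)$ is finite by \Cref{weyl-group} (note $\mZ(\mB)\cong\C$ since $\mB$ is simple), so $|W(\mB\subset\mA)|$ and $\mathrm{dim}(\mC_\mA(\mB))$ are both positive integers and hence $[\mA:\mB]_0\in\N$; moreover $\mathrm{dim}(\mC_\mA(\mB))\ge 1$ gives $|W(\mB\subset\mA)|\le[\mA:\mB]_0$. Finally, if $\mB\subset\mA$ is irreducible then $\mC_\mA(\mB)=\mB'\cap\mA=\C$ has dimension $1$ and $W(\mB\subset\mA)=W_0(\mB\subset\mA)$, so the formula collapses to $[\mA:\mB]_0=|W_0(\mB\subset\mA)|$. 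The only genuinely delicate point in the whole argument is the reconciliation of the left and right quasi-bases in (1); once the $\mathrm{Ad}_{u_g}$-invariance of $\tau_0$ is in place everything else is a routine unwinding of the two quasi-basis relations and a single index computation.
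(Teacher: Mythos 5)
Your proof is correct, and while it shares the paper's overall skeleton --- factor $E_0 = {(E_0)}_{\restriction_\mC}\circ F$ through $\mC$, splice the orthonormal quasi-basis $\{u_g\}$ for $F$ (\Cref{cosetrepresentative}) with a two-sided orthonormal quasi-basis for $\tau_0$, and deduce (2) and (3) from the scalarity of $\mathrm{Ind}_W(\tau_0)=[\mA:\mB]_0/|G|$ via \Cref{tr-2-sided-basis} --- you execute the one genuinely delicate step of (1) by a different mechanism. The paper takes the mirror-image family $\{u_g\lambda_i\}$ (a right quasi-basis by the tower formula) and verifies the missing left condition inside the basic construction: it checks orthonormality by hand and then proves $\sum_{g,i}\lambda_i^*u_g^*e_1u_g\lambda_i=1$ using \Cref{basis-characterization}, the fact that each conjugate family $\{u_g\lambda_iu_g^*\}$ is again a quasi-basis for ${(E_0)}_{\restriction_\mC}$ (imported as \cite[Lemma 3.8]{BG1}), and the identity $\sum_i\lambda_i^*e_1\lambda_i=e_\mC$ of \Cref{eC-via-e1}. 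You instead stay entirely inside $\mA$ and avoid the Jones projections altogether: you first establish the equivariance $E_0\circ\mathrm{Ad}_u=\mathrm{Ad}_u\circ E_0$ for $u\in\mN_\mA(\mB)$ by noting that $x\mapsto u^*E_0(uxu^*)u$ is a conditional expectation onto $\mB$ with the same scalar Watatani index, hence equals $E_0$ by the uniqueness in \Cref{minimal}; this gives $\tau_0\circ\mathrm{Ad}_{u_g}=\tau_0$, so the conjugates $\nu_{j,g}=u_g^*\mu_ju_g$ are again two-sided orthonormal quasi-bases, and the identity $\mu_ju_g=u_g\nu_{j,g}$ converts your left tower basis into a right one by a direct bimodularity computation. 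The two arguments hinge on the same invariance phenomenon (conjugation by $\mN_\mA(\mB)$ preserves $\tau_0$ and hence quasi-bases), but you derive it from minimality rather than citing \cite[Lemma 3.8]{BG1}, which makes your argument self-contained and basic-construction-free, at the cost of somewhat longer in-algebra computations; the paper's route buys brevity by leaning on the $e_1$, $e_\mC$ machinery it has already set up. Your orthonormality check (via $F(u_g^*u_{g'})=\delta_{g,g'}$ from \Cref{orthogonal-system-C} plus trace-invariance), your index computation $\mathrm{Ind}_W(E_0)=|G|\,\mathrm{Ind}_W(\tau_0)$ (the paper obtains the same identity from multiplicativity of the index, \Cref{fi-ce-facts}), and the final bookkeeping (finiteness of $W(\mB\subset\mA)$ from \Cref{weyl-group}, collapse to $W_0$ in the irreducible case) are all sound.
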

\begin{proof}
(1): From \Cref{cosetrepresentative}, $\{u_g : g \in G\}$ is a
 a two-sided orthonormal  quasi-basis for $F$ contained in
  $\mathcal{N}_\mA(\mB)$. And, from \Cref{E0-restriction-to-C-two-sided},
  there exists a two-sided orthonormal quasi-basis for
  ${E_0}_{\restriction_\mC}$ contained in $\mC_\mA(\mB)$, say, $\{\lambda_i:
  1 \leq i \leq n\}$. We assert that $\{u_g \lambda_i: g \in G, 1\leq
  i \leq n\}$ is a two-sided orthonormal quasi-basis for $E_0$.

  Since $E_0 = {E_0}_{\restriction_\mC} \circ F$, it follows easily that
  $\{ u_g \lambda_i: 1 \leq i \leq n, g \in G\}$ is a right
  orthonormal quasi-basis for $E_0$ - see \Cref{fi-ce-facts}. So, we
  just need to show that it is a left orthonormal quasi-basis as well,
  equivalently, $\{\lambda_i^*  u_g^*\}$ is a right orthonormal
  quasi-basis for $E_0$. Clearly,
  \[
  E_0((\lambda_i^* u_g^*)^* \lambda_j^*
  u_h^*) = E_0(u_g F(\lambda_i \lambda_j^*) u_h^*) = \delta_{i,j}
  \delta_{g, h}.
  \]
 So, in view of \Cref{basis-characterization}, it suffices
  to show that $\sum_{g, i} \lambda_i^* u_g^* e_1 u_g \lambda_i = 1$.

For each $g \in G$, it readily follows that $\{u_g \lambda_i u_g^*:
i\}$ is also a two-sided quasi-basis for ${E_0}_{\restriction_\mC}$ -
see \cite[Lemma 3.8]{BG1}. Let $e_\mC$ denote the Jones projection for
the inclusion $\mC\subset \mA$ with respect to the finite-index
conditional expectation $F$. Then, $e_\mC\in \mA_1$ and $\sum_i (u_g\lambda_iu_g^*)^*
e_1 \lambda_i (u_g \lambda_i u_g^*)= e_\mC$ (by \Cref{eC-via-e1}). Thus,
\[
\sum_{g, i } \lambda_i^* u_g^* e_1 u_g \lambda_i = \sum_{g, i} u_g^*
(u_g\lambda_i u_g^*)^* e_1 (u_g\lambda_i u_g^*) u_g = \sum_g u_g^* e_\mC
u_g = 1,
  \]
  where last equality follows from \Cref{basis-characterization}. This
  proves (1).\smallskip

  (2) and (3): Since $E_0 = {E_0}_{\restriction_\mC} \circ F$, it
  follows that $\mathrm{Ind}_W(E_0) = \mathrm{Ind}_W(F)
  \mathrm{Ind}_W({E_0}_{\restriction_\mC})$ - see
  \Cref{fi-ce-facts}. Hence, from \Cref{cosetrepresentative} and
  \Cref{tr-2-sided-basis}, we obtain
    \[
|G| \mathrm{dim}(\mC_\mA(\mB)) = [\mA : \mB]_0= \sum{g, i} \lambda_i^* u_g^* u_g
\lambda_i = \mathrm{Ind}_W(\tau_0).
\]
In particular, $\tau_0$ has scalar Watatani index.  Thus, in view of
\Cref{tr-2-sided-basis}, it follows that $\tau_0$ is the Markov trace
for the inclusion $\C \subseteq \mC_\mA(\mB)$ with modulus
$\mathrm{dim}(\mC_\mA(\mB))$.
\end{proof}

\begin{corollary}
 Let $\mB$ be a simple unital $C^*$-algebra and suppose a finite group
 $K$ admits a $\mathcal{U}(\mB)$-valued outer cocycle  action  $(\alpha, \sigma)$ on $\mB$. Then, the Weyl group of the inclusion $\mB
 \subset \mB\rtimes^r_{(\alpha, \sigma)} K$ is isomorphic to $K$.
  \end{corollary}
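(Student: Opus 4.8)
The plan is to exhibit an explicit isomorphism from $K$ onto the Weyl group built from the canonical implementing unitaries, and then to close the argument by a cardinality count supplied by \Cref{theorem-regular}. Write $\mA := \mB \rtimes^r_{(\alpha,\sigma)} K$. Since $\alpha_g$ is outer for every $e \neq g \in K$, \Cref{bedos-results} guarantees that $\mA$ is simple and that the inclusion $\mB \subset \mA$ is irreducible, so that $\mC_\mA(\mB) = \C$ and hence $W(\mB \subset \mA) = W_0(\mB \subset \mA)$. Moreover, $\mB$ being simple and unital forces $\mZ(\mB) \cong \C$. Recall from \Cref{crossed-product-facts} the implementing unitaries $u_g$, which satisfy $u_g b u_g^* = \alpha_g(b)$ for all $b \in \mB$; in particular $u_g \mB u_g^* = \mB$, so $u_g \in \mN_\mA(\mB)$ for every $g \in K$.

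First I would define $\phi : K \to W_0(\mB \subset \mA)$ by $\phi(g) = [u_g]$, the class of $u_g$ modulo $\mU(\mB)$. Since $\sigma$ is $\mU(\mB)$-valued, the relation $u_g u_h = \sigma(g,h) u_{gh}$ gives $\phi(g)\phi(h) = [u_g][u_h] = [u_g u_h] = [\sigma(g,h) u_{gh}] = [u_{gh}] = \phi(gh)$, so $\phi$ is a group homomorphism. For injectivity, suppose $\phi(g)$ is trivial, i.e.\ $u_g \in \mU(\mB)$; then $\alpha_g = \mathrm{Ad}_{u_g}$ is an inner automorphism of $\mB$, which by outerness forces $g = e$. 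Thus $\phi$ is an injective homomorphism and, in particular, $|K| \leq |W_0(\mB \subset \mA)|$.

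The remaining, and genuinely harder, point will be surjectivity of $\phi$: a direct approach would require showing that every element of $\mN_\mA(\mB)$ coincides, modulo $\mU(\mB)$, with some $u_g$, an awkward computation with the expansion $w = \sum_g b_g u_g$. I would instead finish by a counting argument. The inclusion $\mB \subset \mA$ is regular (\Cref{example-regular}), is an inclusion of simple unital $C^*$-algebras, admits the finite-index conditional expectation $E$ of \Cref{crossed-product-facts} with $\mathrm{Ind}_W(E) = |K|$, and is irreducible; hence \Cref{theorem-regular} applies and yields $|W_0(\mB \subset \mA)| = [\mA:\mB]_0$. Since $E_0$ is minimal and all Watatani indices here are scalars ($\mA$ being simple), $[\mA:\mB]_0 = \mathrm{Ind}_W(E_0) \leq \mathrm{Ind}_W(E) = |K|$, so $|W_0(\mB \subset \mA)| \leq |K|$. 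Combining this with the inequality of the previous paragraph gives $|W_0(\mB \subset \mA)| = |K| < \infty$, whence the injective homomorphism $\phi$ is automatically surjective. Therefore $\phi$ is an isomorphism $K \cong W_0(\mB \subset \mA)$, as claimed. (Alternatively, one may observe that, the inclusion being irreducible, $E$ is the unique conditional expectation onto $\mB$ and hence equals $E_0$, giving $[\mA:\mB]_0 = |K|$ directly.)
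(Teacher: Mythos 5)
Your proof is correct and follows essentially the same route as the paper: both exhibit the injective homomorphism $K \ni g \mapsto [u_g] \in W_0(\mB \subset \mB \rtimes^r_{(\alpha,\sigma)} K)$ (using B\'edos' simplicity/irreducibility result and regularity of the crossed product) and then conclude via \Cref{theorem-regular} that $|W_0| = [\mA:\mB]_0 = |K|$. The only cosmetic difference is that you obtain $[\mA:\mB]_0 \leq |K|$ from minimality of $E_0$ and close the gap by the cardinality squeeze, whereas the paper argues directly that irreducibility forces the canonical expectation $E$ to be the unique, hence minimal, one — which is precisely the alternative you note in your final parenthetical.
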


\begin{proof}
 We simply write $\mB \rtimes K$ for the reduced twisted crossed product
 $\mB \rtimes^r_{(\alpha, \sigma)} K$.

Note that, by the universality of the reduced twisted crossed product, we can assume that there exists
a Hilbert space $\mH$ such that $\mB \subseteq B(\mH)$ and there is a
map $w: K \to \mU(B(\mH))$ such that $w_s \notin \mB$ (for $s \neq
e$),
\[
w_s w_t = \sigma(s, t) w_{st} , w_e= 1, (w_s)^*=
\sigma(s^{-1}, s)^* w_{s^{-1}}, \alpha_s(x) = w_s x w_s^*
\]
for all $s, t \in K$ and $x \in \mB$; and, $\mB \rtimes K = C^*(\mB, w(K))
\subset B(\mH)$.

Since the (twisted) action is outer, $\mB \rtimes K$ is simple and
$\mB'\cap (\mB \rtimes K) = \C$, i.e., $\mB \subset \mB \rtimes K$ is
irreducible (see \Cref{bedos-results}). Also, $\mB \subset \mB \rtimes
K$ is regular (see \Cref{example-regular}); so, $|W_0(\mB \subset \mB
\rtimes K)| = [\mB \rtimes K: \mB]_0$,  by \Cref{theorem-regular}.

Further,
since $\mB \subset \mB \rtimes K$ is irreducible, the canonical
conditional expectation $E: \mB \rtimes K \to \mB$ (as in
\Cref{crossed-product-facts}(3)) is unique (by \cite[Corollary 1.4.3]{Wat})
and hence minimal, which then implies that $[\mB \rtimes K: \mB]_0 = \Ind_W(E)= |K|$, by
\Cref{crossed-product-facts}(5). Thus, $|W_0(\mB \subset \mB
\rtimes K)| = |K|$. 

Finally, $\{w_s : s \in K\} \subset \mN_{\mB \rtimes K}(\mB)$
and the map $K \ni s \mapsto [w_s] \in W_0(\mB \subset \mB \rtimes K)$
is an injective group homomorphism.  Hence, $W_0(\mB \subset \mB
\rtimes K) \cong K.$
  \end{proof}
It will be interesting to answer the following natural question.\smallskip

\noindent {\bf Question:} Suppose a (countable) discrete group $G$
admits a cocycle action $(\alpha, \sigma)$ on a unital $C^*$-algebra
$\mB$. Is the generalised Weyl group of the inclusion $\mB \subset \mB
\rtimes^r_{(\alpha, \sigma)} G$ isomorphic to $G$?
\smallskip \color{black}

The first part of the following observation now follows from \cite{CKP}, which 
is based on  a beautiful application of the so-called ``circulant matrices''
from Quantum Information Theory.
\begin{corollary}\label{E0-restriction-to-C-unitary}
  With running notations, the following hold:
  \begin{enumerate}   
  \item There exists a unitary orthonormal quasi-basis for $\tau_0$.
\item  The conditional   expectation ${E_0}$ admits a
    unitary orthonormal quasi-basis.
  \end{enumerate}
\end{corollary}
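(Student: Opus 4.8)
The plan is to derive (2) from (1) and to obtain (1) directly from the finite-dimensional machinery of \cite{CKP}. For (1), recall from \Cref{theorem-regular} that $\tau_0$ is the (unique) Markov trace for the connected inclusion $\C \subseteq \mC_\mA(\mB)$ with modulus $\mathrm{dim}(\mC_\mA(\mB))$; equivalently, by \Cref{tr-2-sided-basis}, $\mathrm{Ind}_W(\tau_0)$ is the scalar $\mathrm{dim}(\mC_\mA(\mB))$. The task is thus purely finite-dimensional: produce $\mathrm{dim}(\mC_\mA(\mB))$ unitaries $\{v_j\} \subset \mC_\mA(\mB)$ with $\tau_0(v_i^* v_j) = \delta_{i,j}$ and $z = \sum_j \tau_0(z v_j) v_j^*$ for all $z \in \mC_\mA(\mB)$. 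Writing $\mC_\mA(\mB) \cong \bigoplus_{i=1}^k M_{n_i}(\C)$, each matrix block carries the Weyl--Heisenberg (shift-and-clock) system, a nice unitary basis that is orthonormal under the normalized trace; the difficulty is to assemble these block bases into genuinely \emph{global} unitaries of $\bigoplus_i M_{n_i}(\C)$ that remain orthonormal with respect to the weighted trace $\tau_0$, which is delicate precisely because the blocks have differing sizes $n_i$. This is exactly where the circulant-matrix construction of \cite{CKP} enters: it supplies the coefficients needed to splice the block-wise nice unitary bases into $\mathrm{dim}(\mC_\mA(\mB))$ orthonormal global unitaries. I expect this step---importing the circulant/Quantum-Information-Theoretic construction and verifying that its hypotheses are met by the Markov trace $\tau_0$---to be the main and essentially the only substantive obstacle, everything else being formal.

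For (2), with a unitary orthonormal (right) quasi-basis $\{v_j\}$ for $\tau_0$ in hand, I would first lift it to a right quasi-basis for ${(E_0)}_{\restriction_\mC} : \mC \to \mB$, verbatim as in the proof of \Cref{E0-restriction-to-C-two-sided}(2): for $z \in \mC_\mA(\mB)$ and $b \in \mB$ one computes $zb = \sum_j E_0(zb\, v_j) v_j^*$ (using that $b$ commutes with each $v_j \in \mC_\mA(\mB)$), and since $\mC = \overline{\mathrm{span}}\{zb : z \in \mC_\mA(\mB),\, b \in \mB\}$ this yields $w = \sum_j E_0(w v_j) v_j^*$ for all $w \in \mC$; orthonormality persists because $v_i^* v_j \in \mC_\mA(\mB)$ gives ${(E_0)}_{\restriction_\mC}(v_i^* v_j) = \tau_0(v_i^* v_j) = \delta_{i,j}$. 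On the other side, \Cref{cosetrepresentative} already furnishes the unitary orthonormal quasi-basis $\{u_g : g \in G\} \subset \mN_\mA(\mB)$ for $F : \mA \to \mC$. Since $E_0 = {(E_0)}_{\restriction_\mC} \circ F$, the composition rule \Cref{fi-ce-facts}(2) shows that $\{u_g v_j : g \in G,\ j\}$ is a right quasi-basis for $E_0$, and each $u_g v_j$ is a product of unitaries of $\mA$, hence unitary.

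It then remains to check orthonormality of $\{u_g v_j\}$ for $E_0$, which is a short direct verification: for $g,h \in G$ and indices $j,k$, pulling $v_j, v_k \in \mC$ out of $F$ and using $F(u_g^* u_h) = \delta_{g,h}$ from \Cref{cosetrepresentative},
\[
E_0\big((u_g v_j)^* (u_h v_k)\big) = {(E_0)}_{\restriction_\mC}\big(v_j^*\, F(u_g^* u_h)\, v_k\big) = \delta_{g,h}\, {(E_0)}_{\restriction_\mC}(v_j^* v_k) = \delta_{g,h}\,\delta_{j,k}.
\]
Hence $\{u_g v_j\}$ is a unitary orthonormal quasi-basis for $E_0$, which establishes (2). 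Thus the entire weight of the corollary rests on the finite-dimensional circulant construction underlying (1), while the passage to $E_0$ is a clean application of the already-established quasi-basis $\{u_g\}$ for $F$ together with the composition rule.
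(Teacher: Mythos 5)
Your proposal is correct and takes essentially the same route as the paper: part (2) matches the paper's argument verbatim (lifting the unitary orthonormal basis for $\tau_0$ to ${E_0}_{\restriction_\mC}$ as in \Cref{E0-restriction-to-C-two-sided}(2), then composing with the basis $\{u_g\}$ for $F$ from \Cref{cosetrepresentative}), and part (1) rests, as in the paper, on the Markov-trace property from \Cref{theorem-regular} together with \cite[Theorem 2.2]{CKP}. The one step you deferred as the ``main obstacle''---verifying that the hypotheses of \cite{CKP} are met by $\tau_0$---is discharged in the paper by a single observation: $\mC_\mA(\mB)$ embeds trace-preservingly as the unital subalgebra $P=\oplus_{i=1}^k \big(I_{n_i}\otimes M_{n_i}(\C)\big)$ of $M_d(\C)$, $d=\dim(\mC_\mA(\mB))$, because the Markov trace vector $(\frac{n_1}{d},\ldots,\frac{n_k}{d})^t$ is exactly the trace vector of the restriction to $P$ of the unique tracial state on $M_d(\C)$, so \cite[Theorem 2.2]{CKP} applies directly and no further splicing argument is needed.
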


\begin{proof}
(1): Let $(n_1, \ldots, n_k)$ be the dimension vector of $\mC_A(\mB)$;
  so, $\mC_\mA(\mB) \cong \oplus_{i=1}^k M_{n_i}(\C)$. By
  \Cref{theorem-regular}, $\tau_0$ is the Markov trace for the
  inclusion $\C \subseteq \mC_\mA(\mB)$ with modulus $d =
  \mathrm{dim}(\mC_\mA(\mB))$. So, the trace vector of $\tau_0$ is given
  by $\bar{t}= (\frac{n_1}{d}, \ldots, \frac{n_k}{d})^t$.

Note that $ \mC_\mA(\mB)$ is unitally $*$-isomorphic to $ P:=\oplus_{i=1}^k
\left(I_{n_i}\otimes M_{n_i}(\C)\right)$ and $P$ is a unital
$C^*$-subalgebra of $ M_d(\C)$. Further, if $\tau$ denotes the (unique) tracial
state on $M_d$, then the trace vector of $\tau_{\restriction_P}$ is
given by $(\frac{n_1}{d}, \ldots, \frac{n_k}{d})^t$.  Thus,
$\tau_{\restriction_P}$ corresponds to $\tau_0$ via the
$*$-isomorphism between $P$ and $\mC_\mA(\mB)$. By \cite[Theorem
  2.2]{CKP}, there exists a unitary orthonormal quasi-basis for
$\tau_{\restriction_P}$. Hence, there exists a unitary orthonormal
quasi-basis for $\tau_0$.\smallskip

    (2): Let $\{w_i: 1 \leq i \leq n\}\subset \mC_\mA(\mB)$ be a unitary
orthonormal quasi-basis for $\tau_0$. Then it follows on the lines of
\Cref{E0-restriction-to-C-two-sided}(2) that $\{w_i\}$ is a unitary
orthonormal quasi-basis for ${E_0}{_{\restriction_C}}$ as well. From
\Cref{cosetrepresentative}, we know that $\{u_g : g \in G\}$ is a
unitary orthonormal quasi-basis for $F: \mA \to \mC$. Hence, $\{u_g w_i
: g \in G,1 \leq i \leq n\}$ is a unitary orthonormal quasi-basis for
$E_0$.
  \end{proof}

With all requirements at our disposal, imitating the proof of \cite{BG3} we obtain following:
\begin{theorem}\label{regular-depth-2}
Let $\mB \subset \mA$ be a regular inclusion of simple unital
$C^*$-algebras with a finite-index conditional expectation from $\mA$
onto $\mB$. Then, the inclusion $\mB \subset \mA$ has finite depth and
the depth is at most 2.
  \end{theorem}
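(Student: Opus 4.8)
The plan is to follow the strategy of \cite[Theorem 4.3]{BG3} (and \cite[Corollary 3.2]{CKP}), transplanting it from the $II_1$-factor setting to the present $C^*$-setting, the two crucial inputs being the two-sided orthonormal quasi-basis of \Cref{theorem-regular}(1) and, more importantly, the unitary orthonormal quasi-basis of \Cref{E0-restriction-to-C-unitary}(2). First I would set up the tower $\mB \subset \mA \subset \mA_1 \subset \mA_2$ with Jones projections $e_1, e_2$ and dual conditional expectations $E_1 : \mA_1 \to \mA$ and $E_2 : \mA_2 \to \mA_1$, all of finite index by \Cref{dual-ce}; since $\mB$ and $\mA$ are simple, every algebra in the tower is simple (\Cref{simple-A1}, \Cref{dual-index}) and the relevant Watatani indices are scalars. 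Recall that the unitary orthonormal quasi-basis furnished by \Cref{E0-restriction-to-C-unitary}(2) has the explicit form $\{v_\ell\} = \{u_g w_i : g \in G,\, 1 \le i \le n\}$, and I would first record the structural observation that each $v_\ell$ normalizes $\mB$: since $w_i \in \mC_\mA(\mB)$ commutes with $\mB$, we have $u_g w_i\, b\, (u_g w_i)^* = u_g b u_g^* \in \mB$ for every $b \in \mB$. Thus $E_0$ admits an orthonormal quasi-basis lying entirely in $\mN_\mA(\mB)$, which is exactly the hypothesis driving the $II_1$ argument.

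By the definition of depth, it suffices to verify that $\mB' \cap \mA_2 = (\mB' \cap \mA_1)\, e_2\, (\mB' \cap \mA_1)$. The inclusion ``$\supseteq$'' is immediate, since $e_2 \in \mA' \cap \mA_2 \subseteq \mB' \cap \mA_2$ and $\mB' \cap \mA_1 \subseteq \mB' \cap \mA_2$. For the reverse inclusion I would exploit two facts that render the problem finite dimensional and combinatorial: each relative commutant $\mB' \cap \mA_k$ is finite dimensional (by \cite[Proposition 2.7.3]{Wat}, exactly as used in the proof of \Cref{weyl-group}); and, via the Pushdown Lemma (\cite[Lemma 3.7]{JOPT}, \cite[Lemma 2.15]{BG2}) together with $\sum_\ell v_\ell e_1 v_\ell^* = 1$ (\Cref{basis-characterization}), every element of $\mA_1$, and then of $\mA_2$, admits a canonical expansion in terms of $e_1$, $e_2$ and the normalizer basis. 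Feeding a general $X \in \mB' \cap \mA_2$ through this expansion and imposing commutation with $\mB$ would, after the pushdown, express $X$ as a sum $\sum x\, e_2\, y$ with $x, y \in \mB' \cap \mA_1$.

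The heart of the matter, where I would imitate \cite{BG3, CKP} most closely, is the finite-dimensional computation realizing the derived tower $\mC_\mA(\mB) = \mB' \cap \mA \subseteq \mB' \cap \mA_1 \subseteq \mB' \cap \mA_2$ as a basic construction. Because each $v_\ell$ implements an automorphism of $\mB$ that is free outside the generalized Weyl group (\Cref{orthogonal-system-C}), the projections $v_\ell e_1 v_\ell^*$ lie in $\mB' \cap \mA_1$ and, since $(v_\ell e_1 v_\ell^*)(v_m e_1 v_m^*) = v_\ell E_0(v_\ell^* v_m) e_1 v_m^* = \delta_{\ell m}\, v_\ell e_1 v_\ell^*$, are mutually orthogonal with sum $1$ (exactly as in the proof of \Cref{weyl-group}). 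Combined with the finite-dimensional piece $\mC_\mA(\mB)$, on which $\tau_0$ is the Markov trace of modulus $\dim(\mC_\mA(\mB))$ (\Cref{theorem-regular}(2)), this pins down $\mB' \cap \mA_1$ and, repeating one level up, $\mB' \cap \mA_2$. The scalar index $[\mA:\mB]_0 = |W(\mB \subset \mA)|\,\dim(\mC_\mA(\mB))$ from \Cref{theorem-regular}(3) then supplies the dimension bookkeeping that forces the derived tower to close up at the second step, which is precisely the depth-$2$ identity.

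The main obstacle I anticipate is technical rather than conceptual: the arguments of \cite{BG3, CKP} are phrased in terms of the canonical trace of a $II_1$-factor, its Markov property, and explicit matrix units and circulant unitaries, whereas here the natural object is the minimal conditional expectation $E_0$ with its (central, hence scalar) Watatani index. Making the finite-dimensional reduction rigorous therefore requires systematically replacing trace computations by the corresponding statements for $E_0$ and its duals, and checking that simplicity of $\mB$ and $\mA$ legitimizes each replacement (scalar indices via \Cref{minimal} and \Cref{dual-index}, finiteness of $G$ via \Cref{weyl-group}, and the Markov property of $\tau_0$ via \Cref{theorem-regular}). Once these translations are in place, the depth-$2$ conclusion follows exactly as in the factor case.
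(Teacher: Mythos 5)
Your proposal follows essentially the same route as the paper: the paper offers no further argument for \Cref{regular-depth-2}, deducing it by ``imitating the proof of \cite[Theorem 4.3]{BG3}'' (cf.\ \cite[Corollary 3.2]{CKP}) once the two-sided orthonormal quasi-basis of \Cref{theorem-regular} and the unitary orthonormal quasi-basis of \Cref{E0-restriction-to-C-unitary} are in hand --- exactly the two ingredients your sketch isolates, together with the correct observation that the unitary quasi-basis $\{u_g w_i\}$ lies in $\mN_\mA(\mB)$. Your reduction to the identity $\mB'\cap\mA_2=(\mB'\cap\mA_1)\,e_2\,(\mB'\cap\mA_1)$ and your systematic replacement of the $II_1$-trace computations by statements about $E_0$ and its scalar Watatani index are precisely the intended adaptation, so the proposal is correct and matches the paper's proof.
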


\subsection{Characterization of regular inclusions of simple $C^*$-algebras}

We are now all set to prove the main result of this article.
\begin{theorem}\label{regular-characterization}
Let $\mB \subset
  \mA$ be an inclusion of simple unital $C^*$-algebras with a
  finite-index conditional expectation from $A$ onto $\mB$. Then, the
  inclusion $\mB \subset \mA$ is regular if and only if there exists a
  finite group $G$ that admits a cocycle action $(\alpha,\sigma)$ on
  the intermediate $C^*$-subalgebra $\mC:=C^*(\mB \cup \mC_\mA(\mB))$ such
  that \begin{enumerate}
  \item $\mB$ is invariant under $\alpha$;
  \item for each $e \neq g \in G$,  $\alpha_g$ is an outer automorphism of $\mB$;  and,
    \item $(\mB \subset \mA) \cong
      ( \mB \subset \mC\rtimes_{(\alpha, \sigma)}^r G)$.
\end{enumerate}
  \end{theorem}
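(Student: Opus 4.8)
The plan is to prove the two implications separately, taking $G := W(\mB \subset \mA)$ to be the generalized Weyl group throughout the reverse direction. Since $\mB$ is simple we have $\mZ(\mB) \cong \C$, so $G$ is finite by \Cref{weyl-group}; I fix coset representatives $\{u_g : g \in G\} \subset \mN_\mA(\mB)$ with $u_e = 1$, write $\mC := C^*(\mB \cup \mC_\mA(\mB))$, and let $F : \mA \to \mC$ be the compatible conditional expectation furnished by \Cref{IMS-F}. The forward (existence $\Rightarrow$ regularity) direction is short; the reverse direction carries the content, and its only genuinely delicate point is the crossed-product isomorphism in (3).

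For the ``if'' direction, assume (1)--(3) and identify $\mA$ with $\mC \rtimes^r_{(\alpha,\sigma)} G$, with implementing unitaries $v_g$ satisfying $v_g c v_g^* = \alpha_g(c)$. By (1) each $\alpha_g$ preserves $\mB$, so $v_g \mB v_g^* = \mB$ and hence $v_g \in \mN_\mA(\mB)$. Now $\mU(\mB) \subseteq \mN_\mA(\mB)$ trivially, while the unitaries of $\mC_\mA(\mB)$ lie in $\mN_\mA(\mB)$ because they commute with $\mB$; consequently $C^*(\mN_\mA(\mB)) \supseteq C^*(\mB \cup \mC_\mA(\mB)) = \mC$, and adjoining the $v_g$ gives $C^*(\mN_\mA(\mB)) \supseteq C^*(\mC \cup v(G)) = \mA$. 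Thus $\mN_\mA(\mB)$ generates $\mA$ and the inclusion is regular.

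For the ``only if'' direction, I would set $\alpha_g := (\mathrm{Ad}_{u_g})_{\restriction_{\mC}}$, which is an automorphism of $\mC$ since $u_g \in \mN_\mA(\mC)$ by \Cref{orthogonal-system-C}, and $\sigma(g,h) := u_g u_h u_{gh}^*$, which lies in $\mU(\mB)\mU(\mC_\mA(\mB)) \subseteq \mU(\mC)$ since the $u_g$ are coset representatives. A direct computation then shows $(\alpha, \sigma)$ is a cocycle action: the identity $\alpha_g \circ \alpha_h = \mathrm{Ad}_{\sigma(g,h)} \circ \alpha_{gh}$ is immediate from $u_g u_h = \sigma(g,h) u_{gh}$, the $2$-cocycle identity follows from associativity of $u_g u_h u_k$ in $\mA$, and normalization from $u_e = 1$. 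Property (1) holds since $\alpha_g(\mB) = u_g \mB u_g^* = \mB$, and property (2) holds because for $g \neq e$ one has $u_g \in \mN_\mA(\mB) \setminus \mU(\mB)\mU(\mC_\mA(\mB))$, so $\mathrm{Ad}_{u_g}$ restricts to an outer automorphism of $\mB$ by \Cref{orthogonal-system}.

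The remaining point, property (3), is the crux. As $G$ is finite, $\mC \rtimes^r_{(\alpha,\sigma)} G = \{\sum_{g \in G} c_g v_g : c_g \in \mC\}$ by \Cref{crossed-product-facts}(5), so I would define $\Phi(\sum_g c_g v_g) := \sum_g c_g u_g$. Since the $u_g$ satisfy exactly the defining relations of the $v_g$, the map $\Phi$ is a well-defined $*$-algebra homomorphism, and hence automatically contractive as a $*$-homomorphism between $C^*$-algebras; it restricts to the identity on $\mC$, in particular on $\mB$. Surjectivity is the identity $\mA = \sum_g \mC u_g$ from \Cref{cosetrepresentative}. Injectivity, where I expect the main obstacle to lie, I would obtain by comparing conditional expectations: writing $\widetilde{F} : \mC \rtimes^r_{(\alpha,\sigma)} G \to \mC$ for the canonical one, the orthonormality $F(u_g) = \delta_{g,e}$ (again \Cref{cosetrepresentative}) yields $F \circ \Phi = \widetilde{F}$, so $\Phi(x) = 0$ forces $\widetilde{F}(x^* x) = F(\Phi(x^*x)) = 0$ and thus $x = 0$ by faithfulness of $\widetilde{F}$. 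This furnishes the required isomorphism of inclusions $(\mB \subset \mA) \cong (\mB \subset \mC \rtimes^r_{(\alpha,\sigma)} G)$. The essential use of regularity is concentrated exactly here, through \Cref{cosetrepresentative}, which guarantees both that $\mA = \sum_g \mC u_g$ and that the $u_g$ behave orthogonally under $F$.
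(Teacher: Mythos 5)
Your proposal is correct, and the construction of the cocycle action is the same as the paper's: both take $G$ to be the generalized Weyl group (finite by \Cref{weyl-group}), set $\alpha_g = \mathrm{Ad}_{u_g}$ on $\mC$ and $\sigma(g,h) = u_g u_h u_{gh}^*$, and get outerness on $\mB$ from \Cref{orthogonal-system} and invariance of $\mC$ from \Cref{orthogonal-system-C}. Where you genuinely diverge is in establishing the isomorphism (3). The paper defines the map in the direction $\varphi : \mA \to \mC \rtimes^r_{(\alpha,\sigma)} G$ via Fourier coefficients, $\varphi(x) = \sum_g F(xu_g^*)\,v_g$, asserts (with details omitted) that it is a $*$-homomorphism, gets surjectivity from regularity, and injectivity from simplicity of $\mA$. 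You instead define $\Phi : \mC \rtimes^r_{(\alpha,\sigma)} G \to \mA$, $\sum_g c_g v_g \mapsto \sum_g c_g u_g$, using that the $u_g$ satisfy exactly the relations of the $v_g$; this buys you a transparent multiplicativity check, surjectivity directly from $\mA = \sum_g \mC u_g$ (\Cref{cosetrepresentative}), and injectivity from the intertwining $F \circ \Phi = \widetilde{F}$ together with faithfulness of the canonical expectation (\Cref{crossed-product-facts}(3)) --- so you never need simplicity of $\mA$ at this step, whereas the paper does. One point you should make explicit: the reduced twisted crossed product carries no universal property in general, so the well-definedness of $\Phi$ rests on finiteness of $G$, via \Cref{crossed-product-facts}(5), which identifies $\mC \rtimes^r_{(\alpha,\sigma)} G$ with the algebraic twisted crossed product and gives uniqueness of the coefficients $c_g = \widetilde{F}(x v_g^*)$; with that said, $\Phi$ is a $*$-homomorphism between $C^*$-algebras and hence automatically contractive, as you note. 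Finally, your ``if'' direction is actually more careful than the paper's: the paper asserts $\mU(\mC) \subseteq \mN_{\mC\rtimes^r_{(\alpha,\sigma)}G}(\mB)$, which is an overstatement (a unitary of $\mC$ need not normalize $\mB$), whereas you correctly generate $\mC$ from $\mU(\mB) \cup \mU(\mC_\mA(\mB))$, both of which do lie in the normalizer, and then adjoin the implementing unitaries.
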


\begin{proof}
Suppose that a finite group $G$ admits a cocycle action $(\alpha,
\sigma)$ on $\mC$ as in the statement. We can consider a representation
$\mC\subseteq B(\mH)$ such that $\alpha$ is implemented by a map $w: G
\to \mathrm{Aut}(B(\mH))$, i.e., $ \alpha_g = \mathrm{Ad}(w_g)$ for all
$ g \in G$. So, $\mC\rtimes^r_{(\alpha, \sigma)} G = C^*( \mC\cup w(G))$.
The regularity of $ \mB \subset
{\mC\rtimes^r_{(\alpha, \sigma)} G}$ is clear as $\{w_g : g \in G\}\cup
\mathcal{U}(\mC) \subseteq \mN_{\mC\rtimes^r_{(\alpha, \sigma)} G}(\mB)$.
\smallskip

Converserly, suppose that the inclusion $\mB \subset \mA$ is
regular. Consider its generalized Weyl group $G$. Then, $G$ is finite
by \Cref{weyl-group}. We assert that $G$ admits a desired cocycle
action on $\mC$.

Let $\{u_g:g=[u_g]\in G\}$ denote a fixed set of (left) coset
representatives of $G$ in $\mN_{\mA}(\mB)$.  Since $u_g \mC u_g^* =
\mC$ (see \Cref{orthogonal-system-C}), $\alpha_g:= \mathrm{Ad}_{u_g}$
is an automorphism of $\mC$ for every $g \in G$. Moreover, from
\Cref{orthogonal-system-C} again, it follows that $\alpha_g: \mC\to
\mC$ is (free and hence) outer for every $g \neq e$. We assert that
the map $\alpha: G \to \mathrm{Aut}(\mC)$, $g \mapsto \alpha_g$, is in
fact a cocycle action with respect to a $\mU(\mC)$-valued cocycle
$\sigma$, which we describe now.

Note that $[u_gu_h]=[u_{gh}]$ for all $g, h \in G$. So, there
exists a function
\[
\sigma: G \times G \to \mU(\mB) \mU(\mC_\mA(\mB)) \subset
\mU(\mC)
\]
satisfying $u_gu_h=\sigma(g,h)u_{gh}$ for all $g, h \in G$. We assert
that $(\alpha, \sigma)$ is a cocycle action of $G$ on $\mC$.  First,
observe that,
\[
  \alpha_g\alpha_h(x) = \alpha_g(u_hxu^*_h)= u_gu_hxu^*_hu^*_g =
  \text{Ad}\big(\mu(g,h)\big)u_{gh}x u^*_{gh} =
  \text{Ad}\big(\mu(g,h)\big) \alpha_{gh}(x)
\]
 for all $g, h \in G$, $x \in \mC$. Thus, $\alpha_{g} \alpha_h =
 \text{Ad}\big(\sigma(g,h)\big) \alpha_{gh} $ for all $g, h \in G$.

 Now applying the relation $u_gu_h=\sigma(g,h)u_{gh}$ twice, we see that 
 \[
 (u_gu_h)u_k=\sigma(g,h)\sigma(gh,k)u_{(gh)k}; 
 \]
and, on the other hand,
 \[
 u_g(u_hu_k)=u_g \big(\sigma(h,k)u_{hk} \big) =
 \text{Ad}(u_g)\big(\sigma(h,k)\big)\sigma(g,hk)u_{g(hk)}
 \]
  for all $g, h , k \in G$.  Thus,
 \[
 \sigma(g,h)\sigma(gh,k)=\alpha_g\big(\sigma(h,k)\big)\sigma(g,hk)
 \]
 for all $g, h , k \in G$.  And, clearly,
 $\sigma(g,1)=\sigma(1,g)=1$. This proves our assertion that $(\alpha,
 \sigma) $ is a cocycle action of $G$ on $\mC$.

 Further, note that $u_g \mB u_g^* = \mB$ for all $g \in G$ and, for each
 $e \neq g \in G$, $\mathrm{Ad}_{u_g} : \mB \to \mB$ is outer by
 \Cref{orthogonal-system}.
 
Finally, we show that there exists a $*$-isomorphism $\varphi$ from
$\mA$ onto $\mC\rtimes^r_{(\alpha,\sigma)} G$ such that
$\varphi|_{\mC}=\mathrm{id}_\mC$. Let $x\in \mA$. By
\Cref{cosetrepresentative}, we see that $x=\sum_g
F(xu^*_g)u_g$. Define $\varphi: \mA \to \mC\rtimes^r_{(\alpha, \sigma)}
G$ by $\varphi(x) = \sum_g E(xg^{-1}) g$, where $E: \mC
\rtimes^r_{(\alpha, \sigma)} G \to \mC$ is the canonical finite-index
conditional expectation (as in \Cref{crossed-product-facts}).
Clearly, $\varphi|_{\mC}=\mathrm{id}_\mC$ and it is easy to check that
$\varphi$ is a unital $*$-homomorphism. Since the inclusion $\mB \subset
\mA$ is regular, $\varphi$ is surjective as well.  We omit the necessary
details. Since $\mA$ is simple, $\varphi$ is injective and we are done.
   \end{proof}

\begin{corollary}\label{characterization-irreducible-regular}
  Let $\mB \subset \mA$ be a regular 
irreducible  inclusion of simple unital $C^*$-algebras with a finite-index conditional expectation from $\mA$
onto $\mB$.  Then, its Weyl group $G$ admits an outer cocycle action $(\alpha, \sigma)$ on $\mB$ such that $(\mB
\subset \mA) \cong (\mB \subset \mB \rtimes^r_{(\alpha, \sigma)} G)$.
\end{corollary}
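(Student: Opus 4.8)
The plan is to derive this corollary directly from \Cref{regular-characterization} by specializing to the irreducible case, where the centralizer collapses to the scalars. First I would observe that irreducibility means $\mB' \cap \mA = \mC_\mA(\mB) = \C 1$, so that the intermediate $C^*$-subalgebra $\mC = C^*(\mB \cup \mC_\mA(\mB))$ equals $\mB$ itself. Consequently the generalized Weyl group $W(\mB \subset \mA)$ coincides with the Weyl group $W_0(\mB \subset \mA) = G$, since $\mU(\mB)\mU(\mC_\mA(\mB)) = \mU(\mB)$ in this situation.

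With this identification in hand, I would simply invoke \Cref{regular-characterization}. That theorem produces a finite group $G$ (namely the generalized Weyl group, which here is the Weyl group) admitting a cocycle action $(\alpha, \sigma)$ on $\mC = \mB$ such that $\mB$ is $\alpha$-invariant, each $\alpha_g$ is outer on $\mB$ for $g \neq e$, and $(\mB \subset \mA) \cong (\mB \subset \mC \rtimes^r_{(\alpha,\sigma)} G) = (\mB \subset \mB \rtimes^r_{(\alpha,\sigma)} G)$. Conditions (1) and (2) of \Cref{regular-characterization} together say precisely that $(\alpha, \sigma)$ is an outer cocycle action on $\mB$, which is the assertion of the corollary.

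The only genuinely substantive point is verifying $\mC = \mB$, and this is immediate from the definition of irreducibility once one notes that $\mC_\mA(\mB) = \mB' \cap \mA$. There is essentially no obstacle here; the corollary is a clean specialization. I would present it in two short steps: first establish $\mC_\mA(\mB) = \C 1$ forces $\mC = \mB$ and $W(\mB \subset \mA) = W_0(\mB \subset \mA)$, and then quote \Cref{regular-characterization} verbatim with these substitutions. If anything, I would be mildly careful to remark that the finite-index conditional expectation onto $\mB$ from which everything is built is the minimal expectation $E_0$ of \Cref{minimal}, and that in the irreducible case this is the unique conditional expectation onto $\mB$, so no ambiguity arises in the identification $(\mB \subset \mA) \cong (\mB \subset \mB \rtimes^r_{(\alpha,\sigma)} G)$.
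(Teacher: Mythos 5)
Your proposal is correct and matches the paper's intended derivation exactly: the paper states this corollary without proof precisely because, as you observe, irreducibility gives $\mC_\mA(\mB)=\C 1$, hence $\mC=\mB$ and $W(\mB\subset\mA)=W_0(\mB\subset\mA)$ (a point the paper itself records after \Cref{weyl-defn}), so \Cref{regular-characterization} specializes verbatim. Your additional remark that the conditional expectation is unique (hence minimal) in the irreducible case is a harmless and sensible precaution.
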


\begin{remark}
  Two results from subfactor theory which are very relevant to the preceding
  theorem need to be mentioned here:
\begin{enumerate}
\item Choda (in \cite[Theorem 4]{Cho2}), based on one of his earlier
  techniques in \cite[Theorem 7]{Cho1}, had proved that, for any factor
  $M$ with separable predual, for every irreducible regular subfactor $N
  \subset M$ with a faithful conditional expectation from
  $M$ onto $N$, there exists a countable discrete group $G$ which
  admits an outer cocycle action $(\sigma, \omega)$ on $N$ such that $M \cong
  N \rtimes_{(\sigma, \omega)} G$.

\item Later, employing the same technique of Choda (from \cite[Theorem
  7]{Cho1}), Cameron (in \cite[Theorem 4.6]{Cam}) showed that given
  any regular inclusion $N \subset M$ of $II_1$-factors, there exists
  a countable discrete group $G$ which admits a cocycle action
  $(\sigma, \omega)$ on $Q$, the von Neumann algebra generated by $N$
  and $N'\cap M$, such that $M \cong Q \rtimes_{(\sigma, \omega)}
  G$. However, Cameron does not mention whether the action is outer or
  not.
\end{enumerate}
\end{remark}


\begin{thebibliography}{ABC}


\bibitem{Bak} K.~C.~Bakshi, Pimsner-Popa basis, Proc. Indian
  Acad. Sci. Math. Sci. {\bf 127} (2017), no. 1, 117 - 132.
  
\bibitem{BDLR} K.~C.~Bakshi, S.~Das, Z. Liu and Y. Ren, An angle between
  intermediate subfactors and its rigidity,
  Trans. Amer. Math. Soc. {\bf 371} (2019), no. 8, 5973 - 5991.

\bibitem{BGS} K.~C.~Bakshi, S.~Guin and Shruthymurali, Fourier theoretic
  inequalities for inclusions of simple C*-algebras, New York J. Math. {\bf 29} (2023), 335 - 362.

  
\bibitem{BG1} K.~C.~Bakshi and V.~P.~Gupta,  Orthogonal systems, two sided basis
  and regular subfactors, New York J. Math. {\bf  26} (2020), 817 - 835.

\bibitem{BG2} K.~C.~Bakshi and V.~P.~Gupta, Lattice of intermediate subalgebras, J. London Math. Soc. {\bf 104} (2021), no. 5,  2082 - 2127. 

\bibitem{BG3} K.~C.~Bakshi and V.~P.~Gupta, A few remarks on P-P bases
  and regular subfactors of depth 2, Glasgow Math. J. {\bf 64} (2022), no. 3, 586 - 602.


\bibitem{B1} E.~B\'edos, Discrete groups and simple $C^*$-algebras,
  Math. Proc. Cambridge Philos. Soc. {\bf 109} (1991), no.3, 521–537.

\bibitem{BO} E.~B\'edos, and T.~Omland, $C^*$-irreducibility for
  reduced twisted group $C^*$-algebras, J. Funct. Anal. {\bf 284}
  (2023), no. 5,  109795.

\bibitem{Cam} J.~M.~Cameron, Structure results for normalizers of
  $II_1$-factors, International J. Math. {\bf 7} ( 2011), 947-979.
  
  \bibitem{Cho}  H.~Choda, Freely acting automorphisms of operator algebras, K\={o}dai Math. Sem. Rep. {\bf 26} (1974), 1 - 21. 

  \bibitem{Cho1} M.~Choda, Some relations of $II_1$-factors on free
    groups, Japan J. Math. {\bf 22} (1977), no. 3, 383–394.

  \bibitem{Cho2} M.~Choda, A characterization of crossed product of
    factors by discrete outer auto- morphism groups,
    J. Math. Soc. Japan {\bf 31} (1979) 257–261.

  \bibitem{CKP} J.~Crann, D.~W.~Kribbs and R.~Pereira, Orthogonal
    unitary bases and a a subfactor conjecture,
    Proc. Amer. Math. Soc. {\bf 151}, 2023, 3793-3799

  \bibitem{GHJ} F.~M.~Goodman, P.~ de la Harpe and V.~F.~R.~Jones, Coxeter graphs and towers of algebras, Math. Sci. Res. Inst. Publ. {\bf 14},
Springer-Verlag, New York, 1989.

  \bibitem{GS} V.~P.~Gupta and D.~Sharma, On possible values of the
    interior angle between intermediate subalgebras,
    J. Aust. Math. Soc. (2023), 1-23.

\bibitem{Hong} J.~Hong, Characterization of crossed product without
  cohomology, J. Korean Math. Soc. {\bf 32} (1995), no. 2, 183-192.

\bibitem{IW} S.~Ino and Y.~Watatani, Perturbations of intermediate
  $C^*$-subalgebras for simple $C^*$-algebras, Bull.
  Lond. Math. Soc. {\bf 46} (2014), 469–480.

\bibitem{Iz} M.~Izumi, Inclusions of simple $C^*$-algebras, J. Reine
  Angew. Math. {\bf 547} (2002), 97-138.

\bibitem{Iz2} M.~Izumi, Finite group actions on $C^*$-algebras with
  Rohlin property I, Duke J. Math. {\bf 122} (2004), no. 2, 233-280.
  
\bibitem{JOPT} J.~A.~Jeong, H.~Osaka, N.~C.~Phillips and T.~Teruya,
  Cancellation for inclusions of $C^*$-algebras of finite depth,
  Indian Univ.  Math. J. {\bf 58} (2009), no. 4, 1537-1564.

\bibitem{Jones} V.~F.~R.~Jones, Index for subfactors,
  Invent. Math. {\bf 72} (1983), no. 1, 1-25.


\bibitem{JS} V.~F.~R.~Jones and V.~S.~Sunder, Introduction to
  Subfactors, London Mathematical Society Lecture Note Series,
  vol. 234 (Cambridge University Press, 1997).
   
   \bibitem{JoPo} V.~F.~R.~Jones and S.~Popa, Some properties of MASA's
   in factors, Invariant Subspaces and Other Topics, Operator
   Theory: Advances and Applications,  vol 6.  (Birkhauser, Basel,  1982), 89-102.
\bibitem{KW} T.~Kajiwara and Y.~Watatani, Jones index theory by
  Hilbert $C^*$-bimodules and K-theory, Trans.  Amer. Math. Soc. {\bf
    352} (2000), 3429-3472.


\bibitem{Kos} H.~Kosaki, Characterization of crossed product
    (properly infinite case), Pacific J. Math. {\bf 137} (1989), no. 1,  159-167.

\bibitem{pop} S.~Popa, Classification of subfactors: the reduction to
  commuting squares, Invent. Math. {\bf 101} (1990), 19 - 43.

\bibitem{PiPo2} M.~Pimsner, S.~Popa, Finite dimensional approximation
  of algberas and obstructions for the index, J. Funct. Anal. {\bf 98}
  (1991), 270-291.

\bibitem{Ren} J.~Renault, Cartan subalgebras in $C^*$-algebras, Irish
  Math. Soc. Bull. {\bf 61} (2008), 29–63.

    

\bibitem{R} M.~R{\o}rdam,  Irreducible inclusions of simple $C^*$-algebras, {Enseign. Math.} {\bf 69} (2023), no. 3/4, 275-314.


\bibitem{Sut} C.~E.~Sutherland, Cohomology and extensions of von
  Neumann algebras. II, Publ. RIMS, Kyoto Univ. {\bf 16} (1980), 135-174.


\bibitem{Wat} Y.~Watatani, Index for $C^*$-subalgebras, Mem. Amer. Math. Soc. {\bf 83} (1990), no. 424, vi+117 pp.

\end{thebibliography}
       \end{document}